\newtheorem{theorem}{Theorem}[section]
\newtheorem*{theorem*}{Theorem}
\newtheorem{lemma}[theorem]{Lemma}
\newtheorem{corollary}[theorem]{Corollary}
\theoremstyle{definition}
\newtheorem{definition}[theorem]{Definition}
\newtheorem{remark}[theorem]{Remark}
\newtheorem{assumption}{Assumption}
\theoremstyle{remark}
\newcommand{\norm}[1]{\left\lVert#1\right\rVert}
\newcommand{\D}{d}
\newcommand{\ind}{\mathbf{1}}
\newcommand{\ph}{\varphi}
\newcommand{\ro}{\varrho}
\newcommand{\abs}[1]{\left| #1 \right|}
\newcommand{\expr}[1]{\left( #1 \right)}
\newcommand{\ignore}[1]{}
\newcommand{\formula}[2][nolabel]%
{%
 \ifthenelse{\equal{#1}{nolabel}}%
 {\begin{align*} #2 \end{align*}}%
 {%
  \ifthenelse{\equal{#1}{}}%
  {\begin{align} #2 \end{align}}%
  {\begin{align} \label{#1} #2 \end{align}}%
 }%
}
\newcommand{\mk}[1]{{#1}}
\title[Decay rate of harmonic functions for non-symmetric
	$\alpha$-stable processes]{\mk{D}ecay rate of harmonic functions for non-symmetric
	strictly $\alpha$-stable Lévy processes}
\author{Tomasz Juszczyszyn}
\begin{document}

\begin{abstract}
	In this paper we investigate functions that are harmonic with respect to the non-symmetric strictly $\alpha$-stable L\'evy processes on an open set $D \in \mathbb{R}^d$. We obtain the explicit formula for their boundary decay rate at parts of the boudary of $D$ outside of which they vanish.
\end{abstract}

\maketitle

\section{Introduction}
\label{sec:intro}
With rare exceptions, explict boundary decay rate of harmonic functions for jump Markov type processes, or non-local operators, have been studied under the symmetry assumption. The only result for non-symmetric processes or operators known to the author are (\cite{bib:fr16}) and (\cite{bib:cw17}). Here we provide the boundary decay rate for functions harmonic with respect to general stable L\'evy process, an important class of Markov processes with numerous applications. Our result requires realatively mild assumptions on the jump kernel, and works for sufficiently smooth sets.

The important tool in investigating the behaviour of \mk{harmonic} functions near the boundary or existence of their limits is the boundary Harnack inequality. It is a statement about positive harmonic functions in an open set $D$, which are equal to zero on a part of the boundary. It states that if $D$ is regular enough (for example, a Lipschitz domain), $z$ is a boundary point of $D$, $f$ and $g$ are positive and harmonic in $D$, and both $f$ and $g$ converge~to~$0$ on $\partial D \cap B(z, R)$, then for every $r \in (0, R)$
\formula[eq:BHI]{
 \sup_{x \in D \cap B(z, r)} \frac{f(x)}{g(x)} & \le c_{BHI} \inf_{x \in D \cap B(z, r)} \frac{f(x)}{g(x)} \, ,
}
where constant $c_{BHI}$ does not depend on $f$ and $g$.

\smallskip

BHI for harmonic functions of the Laplacian $\Delta$ in Lipschitz domains was proved in 1977–78 by B. Dahlberg (\cite{bib:d77}), A. Ancona (\cite{bib:a78}) and J.-M. Wu (\cite{bib:w78}). In 1989 R. Bass and K. Burdzy proposed an alternative probabilistic proof based on elementary properties of the Brownian motion (\cite{bib:bb90}). 

\smallskip

It is possible to define harmonicity in more probabilistic terms. Let $X$ be a Brownian motion and let $P_t$ be its transition semi-group defined by
\formula{
	P_t f (x) = {\mathbb{E}^x}f(X_t).
}
Then the generator of $P_t$ is Laplacian $\Delta$. Moreover every function $f$ is harmonic in an open set $D$ if and only if for any $x \in B, \overline{B} \subset D$ we have
\formula{
	f(x) = \mathbb{E}^x f(X_{\tau_B}\mathbbm{1}_{\{\tau_B < \infty\}}), && x \in D,
}
where $\tau_B$ is first exit time of $X$ from $B$.

It is possible to extend the definition of Laplacian and corresponding harmonic functions on to non-local operators  by changing the underlying stochastic process.
\smallskip

In 1997 K. Bogdan proved BHI for the fractional Laplacian $\Delta^{\alpha/2}$ (and the isotropic $\alpha$-stable L\'evy process for $0<\alpha<2$) and Lipschitz sets (\hspace{1sp}\cite{bib:b97}). In 1999 R. Song and J. -M. Wu extended the result to all open sets (\hspace{1sp}\cite{bib:sw99}) with $c_{BHI}$ depending on $d,D,z,r$, and in 2007 K. Bogdan, T. Kulczycki and M. Kwa\'snicki extended their result (\hspace{1sp}\cite{bib:bkk08}) by showing that $c_{BHI}$ in fact only depends on $\alpha$ and $d$. In 2008 P. Kim, R. Song and Z. Vondra\v{c}ek proved BHI for subordinate Brownian motions in "fat" sets and in 2011 extended it to a more general class of isotropic L\'evy processes and arbitrary domains (\hspace{1sp}\cite{bib:ksv09},\cite{bib:ksv12}). In 2014 K. Bogdan, T. Kumagai and M. Kwa\'snicki proved BHI for a wide class of non-symmetric processes in duality (\hspace{1sp}\cite{bib:bkk15}). In 2016 similar result was obtained by Z.-Q. Chen, Y.-X. Ren and T. Yang for $\kappa$-fat sets and some processes without dual process (\hspace{1sp}\cite{bib:cry16}). Finally, in 2016 X. Ros-Oton and J. Serra proved BHI for arbitrary open sets and operators with kernels, which are comparable with stable kernels (\hspace{1sp}\cite{bib:rs16}).

\smallskip

In most of the cases mentioned above the constant $c_{BHI}$ in \eqref{eq:BHI} converges to $1$ as $r \to 0$ giving the existence of boundary limits of ratios of harmonic functions. Methods used in those proofs involve so-called \emph{reduction of oscillation}. For jump-type processes this requires additional assumptions (scale invariance of BHI or uniformity of BHI). One of the results, which we will refer to in this paper, is found independently by M. Kwaśnicki and the author (\hspace{1sp}\cite{bib:jk16}), and by P. Kim, R. Song and Z. Vondraček in (\hspace{1sp}\cite{bib:ksv16}), where the existence of the limits is proven for a wide class of non-symmetric processes and arbitrary open sets. 

\smallskip

A natural consequence of the existence of limits of ratios of harmonic functions is the question about explicit decay rate of such functions near the boundary of $D$. The answer is known for a wide class of symmetric processes. For example, P. Kim, R. Song and Z. Vondra\v{c}ek proved in 2014 the result for subordinate Brownian motions where the Laplace exponent $\phi$ of the subordinator satisfies mild scaling conditions (\cite{bib:ksv14}). It states that if $X$ is subordinate Brownian motion then for every $C^{1,1}$ set $D$, every $r>0$, $z \in \partial D$, and every non-negative function $u$ in $\mathbb{R}^d$ which is harmonic in $D \cap B(z,r)$ with respect to $X$ and vanishing continuously on $D^c$ the limit
\formula{
\lim_{x \to z}\frac{u(x)}{\sqrt {\phi (\delta_D (x)^{-2})}} 
}
exists. Another result is the work of T. Grzywny, K.-Y. Kim and P. Kim from 2015, who obtained the decay rates for a large class of symmetric pure jump Markov processes dominated by isotropic unimodal L\'evy processes with weak scaling conditions for sets of class $C^{1,\ro}$ for $\ro \in (\alpha/2,1]$ (\hspace{1sp}\cite{bib:gkk15}).

\smallskip

To our knowledge not much is known about decay rates in the non-symmetric case. Here we would like to mention the work of X. Fernández-Real and X. Ros-Oton for symmetric $\alpha$-stable process with drift (\hspace{1sp}\cite{bib:fr16}) and an ongoing work of Z.-Q. Chen and L. Wang (\hspace{1sp}\cite{bib:cw17}). The goal of this article is to obtain explicit decay rate of harmonic functions in sufficiently regular sets for non-symmetric, strictly $\alpha$-stable processes. The following is our main result.
\begin{theorem}
\label{thm:1}
Let $X$ be a (possibly non-symmetric) $\mathbb{R}^d$-valued strictly $\alpha$-stable process with $\alpha~\in~(0,2)$ and the L\'evy measure given by formula
\formula{
\nu(A) = \int_{A} \frac{1}{\abs{x}^{d+\alpha}}\vartheta\expr{\frac{x}{\abs{x}}}dx,
}
where $\vartheta $ is strictly posivite and of class $ C^{\epsilon}$ on the unit sphere for some $\epsilon > 0$. Let $D$ be a bounded, open $C^{1,1}$ set if $\alpha < 1$, and $C^{2,\alpha -1+\epsilon}$ if $\alpha \geq 1$. Let $z \in \partial D$. Then for every non-negative function $f$, harmonic in $D \cap B(z,R_0)$ with respect to the process $X$ and vanishing continuously on $D^c \cap B(z,R_0)$ the limit
	\formula{
		\lim_{\substack{x \to z \\ x \in D}} \frac{f(x)} {\abs{x-x_D}^{\beta(x_D)}}
	}
exists, where $x_D \in \partial D$ is the boundary point nearest to $x$ and the exponent $\beta$ is given by formula
\formula{
\beta(x) = \alpha \mathbb{P}^0(\langle X_t, x - x_D \rangle > 0)
}

\end{theorem}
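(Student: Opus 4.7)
The plan is to reduce the question to a one-dimensional problem via orthogonal projection onto the inward normal, to solve it explicitly in the half-space model, and then to transfer the result to a curved boundary by local flattening combined with the boundary Harnack inequality (\cite{bib:bkk15}) and the existence of boundary limits of ratios of harmonic functions from \cite{bib:jk16,bib:ksv16}. For a unit vector $n \in S^{d-1}$, let $H_n = \{x : \langle x, n\rangle > 0\}$. Strict $\alpha$-stability of $X$ implies that the one-dimensional projection $Y_t = \langle X_t, n\rangle$ is itself a strictly $\alpha$-stable process on $\R$, whose positivity parameter $\rho_n = \pr^0(Y_1 > 0)$ is independent of $t$. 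Classical one-dimensional theory (Wiener--Hopf factorisation) identifies $y \mapsto y^{\alpha \rho_n}$, up to a multiplicative constant, as the renewal function of the ascending ladder-height process of $Y$, hence as a non-negative harmonic function on $(0,\infty)$ for $Y$ killed at~$0$. Lifting this using the independence of increments of $X$, the function $h_n(x) = \langle x, n\rangle_+^{\alpha \rho_n}$ makes $(h_n(X_{t \wedge \tau_{H_n}}))_{t\ge 0}$ a martingale and is therefore regular harmonic for $X$ on the whole half-space $H_n$ and vanishes continuously on $\partial H_n$.

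Applying the boundary Harnack inequality together with the ratio-convergence theorem of \cite{bib:jk16,bib:ksv16} to the pair $(f, h_n)$ in $H_n \cap B(0, R)$ yields that every non-negative $f$ harmonic in $H_n \cap B(0, R)$ that vanishes continuously on $H_n^c \cap B(0, R)$ satisfies
\formula{
\lim_{H_n \ni x \to 0} \frac{f(x)}{\langle x, n\rangle^{\alpha \rho_n}} \quad \text{exists and is finite,}
}
that is, $f(x) \sim c\, \langle x, n\rangle^{\alpha \rho_n}$ as $x \to 0$ along $H_n$. This handles the model setting and identifies the correct exponent $\beta(n) = \alpha \rho_n$.

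To transfer the conclusion to general $D$, take $z \in \partial D$. For $x \in D$ close to $z$ the nearest-point projection $x \mapsto x_D$ is well defined (by $C^{1,1}$ regularity of $\partial D$) and the inward unit normal $n(x_D)$ is H\"older continuous in $x_D$. The H\"older regularity of $\vartheta$ on the sphere propagates, via continuity of the Wiener--Hopf factors, to H\"older continuity of $\rho_n$ in $n$, and hence of $\beta(x_D) = \alpha \rho_{n(x_D)}$ in $x_D$. Locally I would approximate $D$ near $x_D$ by the tangent half-space $H_{n(x_D)}$ and compare $f$ with the lifted profile $h_{n(x_D)}(\cdot - x_D)$; the geometric mismatch between $D$ and the tangent half-space, together with the drift compensator appearing in the generator of $X$ when $\alpha \ge 1$, produces an error which is $o(|x - x_D|^{\beta(x_D)})$ provided $\partial D$ is $C^{1,1}$ in the case $\alpha < 1$ and $C^{2, \alpha - 1 + \eps}$ in the case $\alpha \ge 1$. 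Invoking the ratio-convergence theorem once more in $D \cap B(z, r)$, with this half-space profile as comparison function, then delivers the claimed limit.

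The hardest part will be the transfer step above, because the exponent $\beta(x_D)$ itself varies with the moving base-point along $\partial D$, in contrast to the symmetric case where $\beta \equiv \alpha/2$ is constant. One has to control simultaneously the curvature error, the compensator error (for $\alpha \ge 1$) and the logarithmic sensitivity of $|x - x_D|^{\beta(x_D)}$ to H\"older perturbations of $\beta(x_D)$. Uniform control of all three across a neighbourhood of $z$ is precisely what forces the two different smoothness thresholds imposed on $\partial D$, and propagating it to the limit assertion will be the main technical burden of the proof.
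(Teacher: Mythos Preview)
Your outline correctly identifies the one-dimensional reduction, the half-space reference profile $h_n(x)=\langle x,n\rangle_+^{\alpha\rho_n}$, and the three sources of error (curvature, compensator when $\alpha\ge 1$, and the $\log$-sensitivity coming from the varying exponent). You also correctly anticipate why the two different smoothness thresholds on $\partial D$ arise. What is missing, however, is the mechanism that converts these error estimates into the limit statement.

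The gap is in the last step. The ratio-convergence theorem of \cite{bib:jk16} (Theorem~\ref{thm:limit} here) requires \emph{both} functions to be regular harmonic in $D\cap B(z,r)$ and to vanish on $D^c\cap B(z,r)$. Neither the half-space profile $h_{n(z)}(\cdot-z)$ with a fixed base point, nor the moving-base-point version $x\mapsto h_{n(x_D)}(x-x_D)$, is harmonic for $X$ in $D$; so you cannot ``invoke the ratio-convergence theorem once more with this half-space profile as comparison function.'' Saying that the mismatch ``produces an error which is $o(|x-x_D|^{\beta(x_D)})$'' is not enough either: that is precisely the conclusion you are after for $f$, and for the profile itself it does not supply a harmonic comparison object.

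The paper closes this gap by a different route. It sets $g(x)=\delta_D(x)^{\beta(x)}$ (so both the true distance and the varying exponent are built in), and the technical core (Lemma~\ref{lem:gen_est}, prepared by Lemmas~\ref{lem:f_1}--\ref{lem:f_2}) is to prove that the pointwise generator satisfies $|\mathcal{A}g|\le c_{gen}$ uniformly near the boundary, \emph{not} that $g$ is approximately equal to some harmonic function. Then a mollification plus Dynkin's formula argument (Lemma~\ref{lem:main}) shows that the genuinely harmonic ``reduction'' $g_r(x)=\mathbb{E}^x g(X_{\tau_{D_r}})$ satisfies $g_r/g\to 1$ in relative oscillation as $r\to 0$; only now can Theorem~\ref{thm:limit} be applied to the pair $(f,g_r)$, and combined with $g_r/g\to 1$ one obtains the limit of $f/g$. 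In short: the missing idea is to pass from ``the would-be profile has bounded generator'' to ``its harmonic reduction is asymptotically the same function,'' via Dynkin's formula and a lower bound of the type $g_r(x)\ge c\,r^{\beta_{max}-\alpha}\,\mathbb{E}^x\tau_{D_r}$.
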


\section{Preliminaries}
\label{sec:preliminaries}

\subsection{Notation, definitions and technical lemmas}

In this section we define objects and properties that will appear in further parts of this paper, discuss our assumptions, as well as some technical lemmas used in the proof.

\smallskip
Let us fix the notation. Throughout this paper $d \geq 2$. By $\langle \cdot,\cdot\rangle$ we denote the usual dot product in $\mathbb{R}^d$. We denote the Euclidean distance between $x$ and $y$ by $\abs{x-y}$ and the Euclidean distance between $x$ and $D^c$ \mk{by} $\delta_{D} (x)$. With $:=$ we define new objects. Each constant, unless stated otherwise, is positive. By $c$ and $c_i, i \in \mathbb{N}$ we denote constants that are less important, thus they may represent different values even in the scope of one lemma or theorem. By $c(a)$ we denote that constant $c$ that depends on $a$. 

\smallskip
By $B(x,r)$ we denote a ball of radius $r$ with its center \mk{at} $x$. To simplify the notation we denote $D_r = D\cap B(0,r)$ and $D^{*}_r = \{x \in D: \delta_{D}(x) < r \}$. By $S^d$ we denote the unit sphere in $\mathbb{R}^d$. For $x = (x_1,x_2, \dots , x_{d}) \in \mathbb{R}^d$ we write $x =(\tilde{x},x_d)$, where $\tilde{x} = (x_1,x_2, \dots , x_{d-1})$ denotes the first $d-1$ coordinates of $x$ and $x_d$ is the last one.

\smallskip
By $C_0$ we denote the class of continuous functions on $\mathbb{R}^d$ converging to 0 as $x \to \infty$ and by $C_c$ we denote the class of functions that are compactly supported.

\smallskip
By changing the coordinate system in $\mathbb{R}^d$ we mean applying \mk{an} isometrical transformation to $\mathbb{R}^d$\mk{. Similarly, by scaling we mean an application of a dilation to $\mathbb{R}^d$. For example, by an appropriate change of the coordinate system, every open half-space with a distinguished boundary point $z$ can be transformed into $\mathbb{H} = \{x \in \mathbb{R}^d : x_d > 0\}$ in such a way that $z$ is mapped to the origin $0$. Similarly, by an appropriate change of the coordinate system and a dilation, every open half-space with a distinguished interior point $x$ can be transformed into $\mathbb{H}$ in such a way that the image of $x$ is equal to $(0, 0, \ldots, 0, 1)$.}

\begin{definition}
For a compact set $K$, we \mk{write} $f \in C^{n,\gamma}(K)$ if the $n$-th order partial derivatives of $f$ are H\"older continuous on $K$ with exponent $\gamma$ ($0 < \gamma \leq 1$). Such functions form a Banach space with norm
\formula{
	\norm{f}_{C^{n,\gamma}(K)} =
	\begin{cases}
		\norm{f}_{L^{\infty}(K)} + \sup \left\{ \frac{\abs{f(y)-f(x)}}{\abs{y-x}^{\gamma}}: x,y \in K  \right\}  & \text{ if } n=0\mk{,}\\
		\Sigma_{\abs{\nu}<n} \norm{D^{\nu}f}_{C^{0,\gamma}(K)}& \text{ if } n \neq 0 \mk{.}
	\end{cases}
}
	For simplicity we write $C^{\kappa} := C^{n,\gamma}$, where $n =  \lfloor \kappa \rfloor, \gamma = \kappa - \lfloor \kappa \rfloor$ when $\kappa > 0$ is not an integer.
\end{definition}

\begin{definition}
	For an open set $D$, we \mk{write} $f \in C^{n,\gamma}(D)$ if $f \in C^{n,\gamma}(K)$ for every compact subset $K$ of $D$.
\end{definition}

\begin{definition}
\label{def:c11}
An open set $D$ in $\mathbb{R}^d$ is of class $C^{n,\gamma}$ if there exists a radius $r>0$ and a constant $C$ such that for every $z \in \partial D$ there exist an isometry $\phi: \mathbb{R}^d \rightarrow \mathbb{R}^d$  and a function $f \in C^{n,\gamma}(\mathbb{R}^{d-1})$ such that $\phi(z) = 0$, $\norm{f}_{C^{n,\gamma}(\mathbb{R}^{d-1})} \leq C$ and $\phi(D) \cap B(0,r) = \{x \in \mathbb{R}^d: x_d > f(\tilde{x})\} \cap B(0,r)$.
\end{definition}

\mk{Recall} that a random variable $X$ has a \emph{strictly stable distribution} if for every $a,b > 0$ there exist $c >0$ such that $aX_1 + bX_2$ and $cX$ have the same distribution if $X_1$, $X_2$ are independent copies of $X$. In this case there exists $\alpha \in (0,2]$ such that $a^{\alpha} + b^{\alpha} = c^{\alpha}$.
We say that $\alpha$ is the \emph{index of stability} of $X$.

\mk{Recall also} that $X = \{ X_t\}_{t \in \mk{[}0,\infty)}$ is a L\'evy process if it is an $\mathbb{R}^d$-valued stochastic process with $X_0 = 0$, stationary and independent increments and c\`adl\`ag paths. 

A L\'evy process is described by the characteristic exponent $\Psi$, which is given by the L\'evy--Khintchine formula:
\formula[eq:levyexp]{
\Psi(\xi) & = \log(\mathbb{E} e^{i \langle \xi, X_1 \rangle}) = -a \abs{\xi}^2 + i \langle \gamma, \xi \rangle - \int_{\mathbb{R}^d \setminus \{0\}} (1 - e^{i \langle \xi, z \rangle} + i \langle \xi, z \rangle \mathbbm{1}_{B(0,1)}(z)) \nu(\D z)
}
for $\xi \in \mathbb{R}^d$, where $a \ge 0$ is the Gaussian component, $\gamma \in \mathbb{R}^d$ is the drift coefficient and $\nu$ is a non-negative measure such that $\int_{\mathbb{R}^d\setminus \{0\}} \min(1, \abs{z}^2) \nu(dz) < \infty$, called L\'evy measure. By $\mathbb{E}^x$ we denote the expectation corresponding to the process $X_t$ with condition $X_0 = 0$ a.s replaced by $X_0 = x$ a.s.
\smallskip
By $\tau_D$ we denote the first time the process $X$ exits an open set $D$, that is
\formula{
	\tau_D = \inf\{ t>0: X_t \notin D \}.
}

We say that  $X = \{ X_t\}_{t \in (0,\infty)}$ is a strictly $\alpha$-stable L\'evy process when it is a L\'evy process such that $X_t$ has strictly $\alpha$-stable distribution for every $t>0$.



\begin{definition}
We define \emph{transition operator} $p_t$ of the process $X$ by the formula
\formula{
	p_tf(x) = \mathbb{E}^x f(X_t)
} 
and the \emph{generator} $\mathcal{L}$ of the process $X$ applied to \mk{a} function $f$ by the formula 
\formula[eq:gen]{
\mathcal{L}f(x) = \lim_{t \to 0^+} \frac{p_tf(x) - f(x)}{t}
}
for every $f \in C_0$ such that above limit exists uniformly on $\mathbb{R}^d$.
\end{definition}

\begin{definition}
	We define the \emph{Dynkin generator} $\mathcal{L_D}$ of the process $X$ applied to \mk{a} function $f$ at \mk{a} point $x$ by the formula 
	\formula[eq:gen_dynkin]{
		\mathcal{L_D}f(x) = \lim_{r \to 0^+} \frac{\mathbb{E}^xf(X_{\tau_{B(x,r)}}) - f(x)}{\mathbb{E}^x\tau_{B(x,r)}}
	}
	for every $f \in C_0$ and $x \in \mathbb{R}^d$ such that above limit exists.
\end{definition}

\mk{It is known that if $f$ is in the domain of the generator $\mathcal{L}$, then $\mathcal{L_D} f(x)$ is well-defined for every $x$ and $\mathcal{L_D} f(x) = \mathcal{L} f(x)$. Conversely, if $f \in C_0$, $\mathcal{L_D} f(x)$ is well-defined for every $x$, and $\mathcal{L_D} f \in C_0$, then $f$ is in the domain of $\mathcal{L}$. We refer to Chapter~V in~\cite{dynkin} for a proof and further discussion.}

For every open set $D$ there exists a \emph{Green function} $G_D(x,y)$ such that $G_D(x,y) \geq 0$ for $x,y \in D$ and $G_D(x,y) = 0$ for $x \in D^c$ or $y \in D^c$ such that \mk{$G_D(x, y)$ is a continuous map from $D \times D$ into $[0, \infty]$, and
\formula{
\int_D G_D(x,y) f(y) dy = \mathbb{E}^x \int_0^{\tau_D} f(X_t) dt
}
for every non-negative function $f$. In particular,}
\formula{
\int_D G_D(x,y) dy = \mathbb{E}^x \tau_D
}
for every $x,y \in D$.

In further parts of this article we \mk{will} use \mk{the} \emph{Ikeda--Watanabe formula}. It states that for every open set $D$, $x \in D$ and \mk{a} L\'evy process $X$ with L\'evy measure $\nu$ we have:  
\formula{
\mathbb{E}^x(f(X_{\tau_D})) = \int_D G_D(x,y)\int_{D^c} \nu(z-y)f(z)dzdy
}
for every \mk{non-negative} function $f$ such that \mk{$f = 0$ in $\overline{D}$}.

\begin{definition} 
We say that a function $f$ is \emph{harmonic} for $X$ in an open set $D$ if for every bounded open set $B$ such that $\overline{B} \subset D$ and $x \in B$ we have 
\formula{
	\mathbb{E}^xf(X_{\tau_B}) = f(x). 
}
We say that a function is \emph{regular harmonic} whenthe above equality holds also for $B = D$. If a function is regular harmonic in an open set $D$, then it is regular harmonic in any open subset of $D$.
\end{definition}

\begin{remark}
	\label{rem:dynkin_harmonic}
	If a function $f$ is harmonic in an open set $D$, for every $x$ in $D$ we have $\mathcal{L_D}f(x) = 0$.
\end{remark}

We proceed with two elementary, technical results.

\begin{lemma}
For $p, q \in (0,1)$, $x,y > 0$ and $\eta \in (0,1]$ there exists $c = c(q,\eta)$ such that 
\formula[eq:oszacowanie1]{
\abs{x^q - y^q} \leq c \max(x,y)^{q - \eta}\abs{x - y}^{\eta},
}
\formula[eq:oszacowanie2]{
\abs{x^p - x^q} \leq \abs{\ln(x)}\max(x^p,x^q)\abs{p - q}.
}
\end{lemma}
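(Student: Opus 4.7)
The plan is to reduce both inequalities to elementary one-variable calculus.

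For \eqref{eq:oszacowanie1}, by symmetry I may assume without loss of generality that $x \geq y > 0$. Dividing both sides by $x^q$ and setting $t = y/x \in (0, 1]$, the claim reduces to
\formula{
1 - t^q \leq c(q, \eta)\,(1-t)^\eta \qquad \text{for every } t \in [0,1).
}
Thus it suffices to verify that $h(t) := (1 - t^q)/(1-t)^\eta$ is bounded on $[0, 1)$. The function $h$ is continuous there, so only its behaviour as $t \to 1^-$ is in question. When $\eta = 1$, writing $1 - t^q = q\, \xi^{q-1}(1 - t)$ for some $\xi \in (t, 1)$ via the mean value theorem shows that $h(t) \to q$. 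When $\eta < 1$, $1 - t^q$ vanishes linearly in $1 - t$ whereas $(1 - t)^\eta$ decays more slowly, so $h(t) \to 0$. In either case $h$ extends continuously to $[0,1]$, and the required constant is $c(q,\eta) := \sup_{[0,1]} h$.

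For \eqref{eq:oszacowanie2}, I would apply the mean value theorem to the smooth function $s \mapsto x^s = \exp(s \ln x)$ on the interval with endpoints $p$ and $q$. This yields, for some $\xi$ strictly between $p$ and $q$,
\formula{
x^p - x^q = (\ln x)\, x^\xi\, (p - q).
}
The map $s \mapsto x^s$ is monotone (increasing when $x > 1$, decreasing when $x < 1$, constant when $x = 1$), so the intermediate value $x^\xi$ lies between $x^p$ and $x^q$; in particular $x^\xi \leq \max(x^p, x^q)$. Taking absolute values finishes the argument.

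No step here presents a genuine obstacle; this is an elementary technical preparation, which I expect to be invoked later to control small perturbations of the exponent $\beta(x_D)$ appearing in Theorem~\ref{thm:1}, for instance when comparing $|x - x_D|^{\beta(y_D)}$ with $|x - x_D|^{\beta(x_D)}$ for nearby boundary points $x_D, y_D$.
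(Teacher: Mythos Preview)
Your proof is correct and follows essentially the same route as the paper: reduce \eqref{eq:oszacowanie1} to the boundedness of $(1-t^q)/(1-t)^\eta$ on $[0,1)$ by scaling (the paper uses l'H\^opital where you use the mean value theorem, with the same conclusion), and handle \eqref{eq:oszacowanie2} via the derivative of $s\mapsto x^s$ (the paper writes this as the integral $\int_p^q \ln(x)\,x^t\,dt$ rather than invoking the mean value theorem, which is the same estimate).
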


\begin{proof}
Without loss of generality we assume that $x > y$. We have
\formula{
\abs{x^q - y^q} &= \frac{\abs{x^q - y^q}}{\abs{x - y}^{\eta}} \abs{x - y}^{\eta} = \frac{\abs{1 - (\frac{y}{x})^q}}{\abs{1 - \frac{y}{x}}^{\eta}} x^{q - {\eta}} \abs{x - y}^{\eta} = \frac{\abs{1 - s^q}}{\abs{1 - s}^{\eta}} x^{q - {\eta}} \abs{x - y}^{\eta}
}
for $s = \frac{y}{x} \in [0,1)$. Since $0 < {\eta} \leq 1$, by l'Hospital's rule,
\formula{
\lim_{s \to 1}\frac{1 - s^q}{(1 - s)^{\eta}} = \lim_{s \to 1}\frac{q s^{q-1}}{{\eta}(1 - s)^{{\eta}-1}} 
}
is equal to $0$ for ${\eta} < 1$ and $q$ for ${\eta} = 1$. Since the function $\frac{1 - s^q}{(1 - s)^{\eta}}$ is continuous on $[0,1)$ and has a limit as $s \to 1$, it is bounded on $[0,1]$ by some constant $c$. It follows that
\formula{
\abs{x^q - y^q} \leq c \max(x,y)^{q - {\eta}}\abs{x - y}^{\eta}.
}
For the second inequality we write
\formula{
		\abs{x^p - x^q} = \abs{\int_p^q \ln(x)x^t dt} \leq \abs{\ln(x)}\max(x^p,x^q)\abs{p - q}.
}
\end{proof}

\begin{lemma}
\label{lem:lagrange}
For any  closed, convex set $K$ and any function $f \in C^{\gamma}(K)$, $1 < \gamma < 2$, we have
\formula{
\abs{f(x) - f(y) - \langle x-y,\nabla f(y) \rangle} \leq \|f\|_{C^{\gamma}(K)}\abs{x-y}^{\gamma}
}
for every $x,y \in K$.
\end{lemma}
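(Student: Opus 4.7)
The plan is to reduce the inequality to a one-dimensional Taylor estimate along the segment from $y$ to $x$, which is contained in $K$ by convexity, and then to extract the Hölder factor from the modulus of continuity of the gradient.

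First I would unpack the regularity: since $1 < \gamma < 2$, the notational convention $C^{\gamma} = C^{n,\gamma-n}$ with $n = \lfloor \gamma \rfloor = 1$ tells us that $f$ is continuously differentiable on $K$ and that $\nabla f$ is Hölder continuous with exponent $\gamma - 1 \in (0,1)$, with norm controlled by $\|f\|_{C^{\gamma}(K)}$. Using convexity of $K$, I would then introduce $g\colon [0,1] \to \mathbb{R}$ defined by $g(t) = f(y + t(x-y))$. By the chain rule $g$ is $C^1$ on $[0,1]$ with $g'(t) = \langle \nabla f(y + t(x-y)), x-y\rangle$.

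Next I would write the quantity to be estimated as a telescoping integral:
\formula{
 f(x) - f(y) - \langle x - y, \nabla f(y) \rangle & = g(1) - g(0) - g'(0) = \int_0^1 \bigl( g'(t) - g'(0) \bigr) \, dt.
}
By the Cauchy-Schwarz inequality and the Hölder continuity of $\nabla f$ with exponent $\gamma - 1$ applied to the two points $y$ and $y + t(x-y)$, both lying in $K$, one gets
\formula{
 \abs{g'(t) - g'(0)} & \le \abs{\nabla f(y + t(x-y)) - \nabla f(y)} \, \abs{x-y} \le \|f\|_{C^{\gamma}(K)} \, t^{\gamma-1} \, \abs{x-y}^{\gamma}.
}

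Plugging this into the integral representation and computing $\int_0^1 t^{\gamma - 1} \, dt = 1/\gamma \le 1$ yields the claimed bound. I do not expect any real obstacle: the argument is the standard Taylor-with-Hölder-remainder calculation, and the only mild subtlety is bookkeeping the convention by which $f \in C^{\gamma}$ translates into $(\gamma-1)$-Hölder continuity of $\nabla f$, together with a mild caveat about the precise definition of $\|f\|_{C^{\gamma}(K)}$ used in the paper (the constant $1/\gamma$ comfortably absorbs any difference of convention).
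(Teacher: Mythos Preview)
Your proof is correct and essentially the same as the paper's: both reduce to the segment from $y$ to $x$, control the first-order remainder via the H\"older continuity of $\nabla f$, and bound $\abs{x_1-y}\le\abs{x-y}$. The only cosmetic difference is that the paper uses the Lagrange mean value theorem to produce a single intermediate point $x_1$, whereas you use the integral form of the remainder (which even gains an extra factor $1/\gamma$).
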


\begin{proof}
	By the mean value theorem,
	\formula{
		f(x) - f(y) - \langle x-y,\nabla f(y) \rangle &= \langle x-y,\nabla f(x_1)\rangle - \langle x-y,\nabla f(y) \rangle \\
		&= \langle x-y,\nabla f(x_1) - \nabla f(y)\rangle
	}
	for some $x_1 = (1-s)x+sy, s\in [0,1]$. Thus we get that
	\formula{
		\abs{f(x) - f(y) - \langle x-y,\nabla f(y)\rangle} & \leq \|f\|_{C^{\gamma}(K)} \abs{x-y} \abs{x_1-y}^{\gamma-1} \leq \|f\|_{C^{\gamma}(K)}\abs{x-y}^{\gamma}. \qedhere
	}
\end{proof}

\smallskip
\subsection{Assumptions and properties of the process $X$}

\begin{assumption}
\label{as:X} 
We assume that $X$ is a strictly $\alpha$-stable $d$-\mk{dimensional} L\'evy process with $d \geq 2,\alpha \in (0,2)$. We assume that \mk{the} L\'evy measure \mk{of $X$} is absolutely continuous with respect to the Lebesgue measure and \mk{it} is given by formula
\formula{
\nu(A) = \int_{A} \frac{1}{\abs{x}^{d+\alpha}} \vartheta \expr{\frac{z}{\abs{z}}}dz,
}
where $\vartheta \in C^{\epsilon}(S)$ for some $\epsilon>0$ and $\vartheta(z) > 0$ for all $z \in S$. 
\end{assumption}

Assumption \ref{as:X} implies that if $\alpha \neq 1$ the L\'evy--Khintchine exponent of the process $X$ has coefficients $a$ and $\gamma$ equal to $0$. If $\alpha = 1$\mk{, the} coefficient $a$ is equal to $0$ and the function $\vartheta$ is symmetric. Moreover\mk{,} strictly $\alpha$-stable processes are scaling invariant.

\begin{definition}
We define the \emph{pointwise generator} $\mathcal{A}$ of process $X$ at point $x$ by formula
\formula[eq:generator]{
\begin{aligned}
	&\mathcal{A}f(x) =  \int_{\mathbb{R}^d} (f(y) - f(x)) \nu (y-x)dy & \text{ if } \alpha < 1\\
	&\mathcal{A}f(x) = \langle \gamma, \nabla f(x) \rangle + \int_{\mathbb{R}^d} (f(y) - f(x) - \langle\nabla f (x),y-x\rangle\mathbbm{1}_{B(x,r)}(y)) \nu (y-x)dy &\text{ if }\alpha = 1\\
	&\mathcal{A}f(x) = \int_{\mathbb{R}^d} (f(y) - f(x) - \langle\nabla f (x),y-x\rangle ) \nu (y-x)dy &\text{ if }\alpha > 1
\end{aligned}
}
for every function $f$ for which the integral is finite at $x$. In particular\mk{,} this is the case for any bounded function $f$ \mk{which is $C^{\alpha + \epsilon}$ in} some neighbourhood of $x$ \mk{for some $\epsilon > 0$; see} \cite{bib:ks19}. Note that in case $\alpha = 1$, since the L\'evy measure of the process is symmetric, the definition of $\mathcal{A}$ does not depend on $r >0$.
\end{definition}

\begin{definition}
For any unit vector $u \in S^d$, we define the one-dimensional L\'evy process $X^{u} = \{\langle X_t,u \rangle\}_{t \in \mathbb{R}^+}$ which is the orthogonal projection of $X$ onto the line $\{x u: x \in \mathbb{R}\}$. By $\nu_{u}$ we denote its L\'evy measure. Also, for $z \in \mathbb{R}^d$ by $\mathbb{H}_{u,z}$ we denote the half-space $\{x: \langle x-z,u \rangle > 0 \}$.
\end{definition}

\begin{lemma}
The process $X^u$ is a one-dimensional strictly $\alpha$-stable L\'evy process. Its L\'evy measure $\nu_{u}$ is absolutely continuous with respect to the Lebesgue measure and its density $\nu_{u}(z)$ is given by the formula
\formula[eq:dens_nu_u]{
\begin{aligned}
\nu_{u}(z) &= \frac{1}{z^{\alpha + 1}} \int_{S_{u}} \vartheta(w) \langle u,w \rangle ^{\alpha} dw && \text{ if }z > 0 \\
\nu_{u}(z) &= \nu_{-u}(-z) && \text{ if }z < 0,
\end{aligned}
}
where $S_{u} = S^d \cap \mathbb{H}_{u,0}$ and $dw$ is the surface measure on the unit sphere.
\end{lemma}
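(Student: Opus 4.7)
The plan is to exploit the fact that $X^u$ is the image of $X$ under the linear projection $T \colon \mathbb{R}^d \to \mathbb{R}$, $Tx = \langle x, u \rangle$. First, since a linear (continuous) image of a L\'evy process is again a L\'evy process (stationary and independent increments, càdlàg paths, and $X^u_0 = 0$ are all preserved), $X^u$ is one-dimensional L\'evy. For strict $\alpha$-stability, I would invoke the defining property of $X$: for all $a, b > 0$ there exists $c > 0$ with $a^\alpha + b^\alpha = c^\alpha$ such that $a X^{(1)}_1 + b X^{(2)}_1 \stackrel{d}{=} c X_1$ for independent copies. Taking the projection $\langle \cdot, u\rangle$ of this identity in distribution gives the analogous identity for $X^u$, so $X^u$ is strictly $\alpha$-stable with the same index.

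Next, I would compute $\nu_u$ as the pushforward $T_* \nu$. For $z > 0$, the tail is
\formula{
\nu_u((z,\infty)) = \nu(\{x \in \mathbb{R}^d : \langle x, u\rangle > z\}) = \int_{\langle x, u\rangle > z} \frac{\vartheta(x/\abs{x})}{\abs{x}^{d+\alpha}} dx,
}
which I would evaluate using spherical coordinates $x = rw$, $r > 0$, $w \in S^d$. The condition $\langle x, u \rangle > z$ becomes $r \langle w, u \rangle > z$; this is non-trivial exactly for $w \in S_u$, and then equivalent to $r > z/\langle w, u\rangle$. A direct computation gives
\formula{
\nu_u((z,\infty)) = \int_{S_u} \vartheta(w) \int_{z/\langle w, u\rangle}^\infty r^{-1-\alpha}\, dr\, \sigma(dw) = \frac{z^{-\alpha}}{\alpha} \int_{S_u} \vartheta(w) \langle w, u \rangle^\alpha\, \sigma(dw),
}
and differentiating in $z$ yields the claimed density formula for $z > 0$.

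For $z < 0$, the cleanest route is to observe that $\{\langle x, u\rangle < z\} = \{\langle x, -u\rangle > -z\}$, so $\nu_u$ restricted to $(-\infty, 0)$ is obtained from $\nu_{-u}$ restricted to $(0,\infty)$ by reflection; this gives $\nu_u(z) = \nu_{-u}(-z)$. Absolute continuity of $\nu_u$ follows since its distribution function on $\mathbb{R} \setminus \{0\}$ is differentiable. Finiteness of $\int_{S_u} \vartheta(w) \langle w,u\rangle^\alpha dw$ is clear from continuity of $\vartheta$ on the compact sphere, so the density is well-defined.

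The argument is essentially a single coordinate calculation; the only place one needs to be a little careful is making sure that the projection really preserves strict $\alpha$-stability (as opposed to merely $\alpha$-stability), which is why I would argue it directly from the scaling definition rather than via the L\'evy--Khintchine exponent. No real obstacle beyond bookkeeping.
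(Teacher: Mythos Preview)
Your proposal is correct and follows essentially the same approach as the paper: compute the tail $\nu_u((z,\infty))$ as the pushforward of $\nu$ under the projection, evaluate it in spherical coordinates, and differentiate. The paper is actually terser---it does not spell out why $X^u$ is strictly $\alpha$-stable L\'evy or the reflection for $z<0$---so your added justifications are welcome filling-in rather than a different route.
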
 
 
\begin{proof}
We begin by calculating the tail of the measure $\nu_u$. Let $x \in \mathbb{R}^d$ and let $z_0 >0$. We have
\formula{
\int_{z_0}^{\infty} \nu_u(dz) &= \int_{\mathbb{H}_{u,z_0u}} \abs{x}^{-d-\alpha} \vartheta \expr{\frac{x}{\abs{x}}} dx.
}
We use spherical coordinates:
\formula{
\int_{z_0}^{\infty} \nu_u(dz) &= \int_{S_{u}} \int_{z_0/\langle u,w \rangle}^{\infty}r^{-1-\alpha} \vartheta(w)  drdw = \frac{1}{\alpha}\frac{1}{z_0^\alpha}\int_{S_{u}} \vartheta(z)\langle u,w \rangle^{\alpha}dw.
}
By differentiation, we get \eqref{eq:dens_nu_u}. The case of $z_0 < 0$ is very similar. 
\end{proof}

Since $X^u$ is a one-dimensional $\alpha$-stable L\'evy process\mk{,} below we recall some facts about harmonic functions for these processes. 

\begin{theorem}[see Example 2 in \cite{bib:b99}]
\label{thm:one-dim}
Let $Y$ be a one-dimensional $\alpha$-stable L\'evy process. Let $\beta = \alpha\mathbb{P}(Y_1 > 0)$. Then the function
\formula{
h(x) &= x^{\beta}\mathbbm{1}_{(0,\infty)}(x)
} 
is regular harmonic for $Y$ in $(0,a)$ for every $a>0$. 
\end{theorem}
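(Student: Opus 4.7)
The plan is to reduce the theorem to the pointwise identity $\mathcal{A} h \equiv 0$ on $(0, \infty)$, and then promote this analytic identity to regular harmonicity in $(0, a)$ via Dynkin's formula.

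First, I would note that $h$ is of class $C^{\alpha+\eps}$ on every compact subset of $(0, \infty)$ for $\eps$ small enough, so $\mathcal{A} h(x)$ is well-defined for every $x > 0$. The self-similarity of $Y$ yields $\mathcal{A}h(x) = x^{\beta - \alpha}\mathcal{A}h(1)$ after the change of variables $y \mapsto xy$ in \eqref{eq:generator}, so it suffices to check $\mathcal{A}h(1) = 0$. Writing the one-dimensional L\'evy density as $\nu(z) = c_+ z^{-\alpha-1} \mathbbm{1}_{(0,\infty)}(z) + c_-|z|^{-\alpha-1}\mathbbm{1}_{(-\infty,0)}(z)$ and substituting into the appropriate branch of \eqref{eq:generator}, $\mathcal{A}h(1)$ splits into integrals of the form $\int_0^\infty \bigl((1+u)^\beta-1\bigr)u^{-\alpha-1}\,du$ and $\int_0^1 \bigl((1-u)^\beta-1\bigr)u^{-\alpha-1}\,du$ (with linear corrections when $\alpha\geq 1$). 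After integration by parts, each piece becomes a beta function, and the cancellation $\mathcal{A}h(1) = 0$ reduces to a trigonometric identity characterizing $\rho = \mathbb{P}(Y_1 > 0)$ in terms of the skewness $(c_+ - c_-)/(c_+ + c_-)$. This identity is precisely Zolotarev's formula for the positivity parameter, and verifying it is the main obstacle: it is the only step where the specific form $\beta = \alpha\rho$ is essential.

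Second, to deduce regular harmonicity in $(0, a)$, I would introduce a smooth, compactly supported cutoff $h_n$ agreeing with $h$ on $[1/n, na]$ and apply Dynkin's formula:
\formula{
\mathbb{E}^x h_n(Y_{\tau_{(0, a)}}) - h_n(x) = \mathbb{E}^x \int_0^{\tau_{(0, a)}} \mathcal{A} h_n(Y_s)\, ds.
}
As $n \to \infty$, $\mathcal{A}h_n \to \mathcal{A}h = 0$ on each compact subset of $(0, \infty)$, and the error from truncation outside $[1/n, na]$ is controlled by $\nu$-integrals that are uniformly small because $\beta < \alpha$; combined with $\mathbb{E}^x\tau_{(0,a)} < \infty$, the right hand side vanishes in the limit. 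On the left hand side, the overshoot distribution of $Y$ above $a$ has density bounded by $c|y|^{-\alpha-1}$ via the Ikeda--Watanabe formula, so the family $h_n(Y_{\tau_{(0, a)}})$ is dominated by an integrable envelope (again using $\beta < \alpha$), and bounded convergence produces $\mathbb{E}^x h(Y_{\tau_{(0, a)}}) = h(x)$.

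An alternative route, which I would mention but not pursue, avoids the Zolotarev computation entirely: one identifies $x^{\alpha\rho}$ (up to a constant) with the renewal function of the ascending ladder-height subordinator of $Y$, which the Wiener--Hopf factorization shows to be an $\alpha\rho$-stable subordinator, and then invokes the classical theorem of Silverstein that this renewal function is harmonic for $Y$ killed at exiting $(0, \infty)$. The same truncation argument then upgrades harmonicity to regular harmonicity in $(0, a)$. Either route pinpoints the same underlying fact, namely that the exponent of the invariant function is dictated by the positivity parameter $\rho$.
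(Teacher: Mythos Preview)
The paper does not give a proof of this theorem; it is quoted as a known fact with a reference to Example~2 in~\cite{bib:b99} and then used as input for Lemma~\ref{tw:harmonicznosc_w_R^d} and Corollary~\ref{cor:dynkin_pionwise}. Your proposal therefore supplies more than the paper does.

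Your main route is sound: the scaling identity $\mathcal{A}h(x)=x^{\beta-\alpha}\mathcal{A}h(1)$ is correct for a strictly stable generator, and the reduction of $\mathcal{A}h(1)=0$ to beta integrals that collapse via the Zolotarev relation~\eqref{eq:alpha_plus} is the standard computation. Two small points are worth tightening. First, the inequality $\beta<\alpha$ that you use twice for integrability is equivalent to $\rho<1$; in the paper's context this is guaranteed by Assumption~\ref{as:X} (see Lemma~\ref{lem:beta_ogr}), but for a stand-alone statement you should flag it. Second, applying Dynkin's formula directly with the exit time from $(0,a)$ is delicate, because your truncation error $\mathcal{A}h_n-\mathcal{A}h$ is not uniformly small near the endpoints of $[1/n,na]$. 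The clean fix is to apply Dynkin's formula first on the subinterval $(1/n,a)$, where $h_n=h$ and the truncation error is uniformly controlled, and only then let $n\to\infty$ using monotone convergence of the exit times; this is exactly the device the paper itself uses later in the proof of Lemma~\ref{lem:main}.

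Your ``alternative route'' via the ascending ladder-height subordinator and Silverstein's theorem is in fact the argument indicated in the cited source: the Wiener--Hopf factorisation of a strictly $\alpha$-stable process identifies the ladder-height process as an $\alpha\rho$-stable subordinator, whose renewal function is $x^{\alpha\rho}$ up to a constant, and Silverstein's result then gives harmonicity on $(0,\infty)$. So your secondary approach is closer to the reference, while your primary approach is an independent, more computational verification.
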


Recall that for a one-dimensional strictly $\alpha$-stable L\'evy process, the L\'evy measure $\mu$ is absolutely continuous with respect to the Lebesgue measure and its density is given by formula
\formula[eq:one_dim_dens]{
\mu(z) = C^- \frac{1}{\abs{z}^{\alpha + 1}} \ind_{(-\infty,0)}(z) + C^+ \frac{1}{\abs{z}^{\alpha + 1}}(z) \ind_{(0,\infty)},
}
where $C^-, C^+ \geq 0, C^- + C^+ > 0$ and if $\alpha = 1$ then \mk{necessarily} $C^- = C^+$.
In that case the parameter $\beta$ can be given explicitly by the formula (see \cite{bib:z57})
\formula[eq:alpha_plus]{
	\begin{aligned}
		\beta &= \frac{\alpha}{2} + \frac{1}{\pi} \arctan \expr{\frac{C^+ - C^-}{C^+ + C^-} \tan \expr{\frac{\alpha \pi}{2}}}\\
	\end{aligned}
}
if $\alpha \neq 1$, while for $\alpha = 1$ we have $C^+ = C^- > 0$ and
\formula[eq:alpha_plus1]{
	\beta &= \mathbb{P}(X_1 > 0) = \int_{0}^{\infty} \frac{1}{\pi} \frac{C^+}{(C^+)^2+(x-b)^2}dx = \frac{1}{2} + \frac{1}{\pi} \arctan(b/C^+),
}
where $b$ is the the drift of the process.

In the remaining part of this article we \mk{use} the objects defined in Theorem~\ref{thm:one-dim} for projections $X^u$. In this case we denote the dependence on $u$ by writing $C^+(u), C^-(u)$ and $\beta(u)$.

\begin{lemma}
	\label{lem:beta_ogr}
	There are constants $\beta_{min}(X) > \max\{0, \alpha -1\}$ and $\beta_{max}(X) < \min\{\alpha,1\}$ such that
	\formula{
		\beta_{min}(X) \leq \beta(u) \leq \beta_{max}(X)
	} 
	for every $u \in S^d$.
\end{lemma}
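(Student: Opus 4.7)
The plan is to show that $\beta \colon S^d \to \mathbb{R}$ is continuous and that $\beta(u)$ lies in the open interval $(\max\{0,\alpha-1\}, \min\{\alpha,1\})$ for every $u$; the conclusion then follows from compactness of $S^d$, since a continuous function on a compact set attains its extrema, so the extrema themselves must lie strictly inside this interval. The analytic content sits entirely in obtaining uniform two-sided bounds on the one-dimensional parameters $C^{\pm}(u)$; the passage to $\beta$ is algebraic via \eqref{eq:alpha_plus}--\eqref{eq:alpha_plus1}.

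The first step is to bound $C^{\pm}(u)$. Since $\vartheta$ is continuous and strictly positive on the compact sphere, there exist $0 < \vartheta_{\min} \le \vartheta_{\max} < \infty$ with $\vartheta_{\min} \le \vartheta \le \vartheta_{\max}$. From \eqref{eq:dens_nu_u},
\formula{
C^+(u) = \int_{S_u} \vartheta(w) \langle u,w\rangle^{\alpha} dw, \qquad C^-(u) = \int_{S_{-u}} \vartheta(w) \abs{\langle u,w\rangle}^{\alpha} dw,
}
and $\int_{S_u} \langle u,w\rangle^{\alpha} dw$ is a constant $c_\alpha > 0$ independent of $u$ by rotational invariance of the surface measure. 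Hence
\formula{
\vartheta_{\min} c_\alpha \le C^{\pm}(u) \le \vartheta_{\max} c_\alpha
}
uniformly in $u$, and $u \mapsto C^{\pm}(u)$ is continuous by dominated convergence.

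For $\alpha \ne 1$, formula \eqref{eq:alpha_plus} writes $\beta(u)$ as $\alpha/2 + (1/\pi)\arctan(r(u)\tan(\alpha\pi/2))$ with $r(u) := (C^+(u)-C^-(u))/(C^+(u)+C^-(u))$. Using the identity $\abs{r(u)} = 1 - 2\min(C^+(u), C^-(u))/(C^+(u) + C^-(u))$ together with the bounds above,
\formula{
\abs{r(u)} \le 1 - \frac{\vartheta_{\min}}{\vartheta_{\max}} < 1
}
uniformly in $u$. A direct computation shows that $r = \pm 1$ in \eqref{eq:alpha_plus} produces exactly the two endpoints $\max\{0,\alpha-1\}$ and $\min\{\alpha,1\}$; since the map $r \mapsto \alpha/2 + (1/\pi)\arctan(r\tan(\alpha\pi/2))$ is continuous and strictly monotone, the uniform strict bound on $\abs{r(u)}$ translates into uniform bounds on $\beta(u)$ inside the required open interval.

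The case $\alpha = 1$ is handled separately and is where I expect the only small subtlety. Here Assumption~\ref{as:X} forces $\vartheta$ to be symmetric, so $C^+(u) = C^-(u)$, and \eqref{eq:alpha_plus1} gives $\beta(u) = 1/2 + (1/\pi)\arctan(b(u)/C^+(u))$. A short computation from the L\'evy--Khintchine representation \eqref{eq:levyexp}, using the symmetry of $\nu$, identifies the one-dimensional drift of $X^u$ as $b(u) = \langle \gamma, u\rangle$: the correction coming from the different truncations $\ind_{B(0,1)}(z)$ in $\mathbb{R}^d$ and $\ind_{(-1,1)}(\langle u,z\rangle)$ in $\mathbb{R}$ integrates to zero against a symmetric $\nu$. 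Since $\abs{b(u)} \le \abs{\gamma}$ and $C^+(u) \ge \vartheta_{\min} c_\alpha > 0$, the argument of $\arctan$ is uniformly bounded, so $\beta(u)$ stays in a compact subinterval of $(0,1)$. In every case, continuity of $\beta$ on the compact sphere gives attained extrema $\beta_{\min}(X)$ and $\beta_{\max}(X)$ that are strictly interior to the required interval.
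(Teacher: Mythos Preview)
Your proof is correct and follows essentially the same approach as the paper: bound the asymmetry ratio $r(u)=(C^+(u)-C^-(u))/(C^+(u)+C^-(u))$ uniformly away from $\pm 1$ using the positivity and continuity of $\vartheta$, feed this into the $\arctan$ formula \eqref{eq:alpha_plus} (handling $\alpha=1$ separately via \eqref{eq:alpha_plus1}), and conclude by compactness of $S^d$. You are more explicit than the paper in deriving the uniform bound $|r(u)|\le 1-\vartheta_{\min}/\vartheta_{\max}$ and in identifying $b(u)=\langle\gamma,u\rangle$; for the latter, note that the truncation-correction integral you mention is not absolutely convergent when $\alpha=1$, so the cleanest justification is to use the symmetric form $\Psi(\xi)=i\langle\gamma,\xi\rangle-\int(1-\cos\langle\xi,z\rangle)\nu(dz)$ directly.
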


\begin{proof}
	There exists a constant $c(X) \in (0,1)$ such that 
	\formula{
		-1 + c \leq \frac{C^+(u) - C^-(u)}{C^+(u) + C^-(u)} \leq 1-c
	} 
	for any $u \in S$.
	By \eqref{eq:alpha_plus} we have
\formula{
	\frac{\alpha}{2} - \frac{1}{\pi}\abs{\arctan\expr{\tan\frac{\alpha \pi}{2}}} < \beta(u) < \frac{\alpha}{2} + \frac{1}{\pi}\abs{\arctan\expr{\tan\frac{\alpha \pi}{2}}}
} 
for $\alpha \in (0,2) \setminus \{1\}$. Since 
\formula{
	\arctan\expr{\tan\frac{\alpha \pi}{2}} =
	\begin{cases}
		\frac{\alpha \pi}{2}  & \text{ if } \alpha < 1 \mk{,} \\
		\frac{(\alpha - 2) \pi}{2} & \text{ if } \alpha > 1 \mk{,}
	\end{cases}
}
and since $\beta(u)$ is \mk{a} continuous function on a compact set\mk{,} we have
\formula{
\max\{0, \alpha -1\} < \beta(u) < \min\{\alpha,1\}
}
for $\alpha \in (0,2) \setminus \{1\}$.
\mk{When} $\alpha = 1$\mk{, the desired result} follows from \eqref{eq:alpha_plus1}.
\end{proof}

Note that the constants $\beta_{min}$ and $\beta_{max}$, even though depend\mk{ent} on $X$, \mk{are invariant under a} change the coordinate system \mk{and scaling}. We keep the notation $\beta_{min}$ and $\beta_{max}$ till the end of this article.

\begin{remark}
	\label{rem:beta_holder}
	By \eqref{eq:one_dim_dens} and \eqref{eq:dens_nu_u}\mk{, the function} $C^+(u)$ \mk{is a spherical} convolution of \mk{a $C^{\alpha}(S^d)$ `zonal' function $w \mapsto (\max\{\langle u, w\rangle, 0\})^\alpha$} and \mk{a} $C^{\epsilon}(S^d)$ function \mk{$\theta$. Thus, $C^+(u)$ and $C^-(u) = C^+(-u)$ belong to} $C^{\alpha+\epsilon}(S^d)$. By \eqref{eq:alpha_plus} for $\alpha \neq 1$\mk{,} and \mk{by} \eqref{eq:alpha_plus1} for $\alpha = 1$\mk{,} we have that $\beta(u)$ is \mk{in} $C^{\alpha+\epsilon}(S^d)$.
\end{remark}

\begin{lemma}
\label{tw:harmonicznosc_w_R^d}
Let $X$ be a $d$-dimensional strictly $\alpha$-stable L\'evy process. Let $u \in S^d$. Then the function
\formula{
h_{u,z}(x) = (\delta_{\mathbb{H}_{u,z}}(x))^{\beta(u)}
}
is a regular harmonic function for $X$ in $D \cap \mathbb{H}_{u,z}$ for every bounded open set $D$.
\end{lemma}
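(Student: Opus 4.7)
The plan is to reduce the claim, via an isometry, to the standard setting $u = e_d$ and $z = 0$, so that $\mathbb{H}_{u,z}$ becomes the upper half-space $\mathbb{H} = \{x \in \mathbb{R}^d : x_d > 0\}$ and $h := h_{u,z}$ becomes $h(x) = x_d^{\beta(u)} \ind_{(0, \infty)}(x_d)$. This reduction is legitimate because both the class of strictly $\alpha$-stable L\'evy processes and the notion of regular harmonicity are invariant under isometries.

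The core step is to show that $h$ is regular harmonic in the strip $S_a := \{x \in \mathbb{R}^d : 0 < x_d < a\}$ for every $a > 0$. The key observation is that $X$ exits $S_a$ exactly when its projection $X^u_t = (X_t)_d$ exits $(0, a)$; therefore $\tau_{S_a} = \tau^{X^u}_{(0,a)}$ and $(X_{\tau_{S_a}})_d = X^u_{\tau^{X^u}_{(0,a)}}$ under $\mathbb{P}^x$. Since $h$ depends only on the $d$-th coordinate, the identity $\mathbb{E}^x h(X_{\tau_{S_a}}) = h(x)$ for $x \in S_a$ reduces to
\formula{
\mathbb{E}^{x_d} \bigl[ (X^u_{\tau^{X^u}_{(0,a)}})^{\beta(u)} \ind_{(0, \infty)}(X^u_{\tau^{X^u}_{(0,a)}}) \bigr] = x_d^{\beta(u)},
}
which is precisely the regular harmonicity of $y \mapsto y^{\beta(u)} \ind_{(0, \infty)}(y)$ for $X^u$ on $(0,a)$, as provided by Theorem~\ref{thm:one-dim} (noting that $\beta(u) = \alpha\,\mathbb{P}(X^u_1 > 0)$ by definition).

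To pass to an arbitrary bounded open $D$, set $B := D \cap \mathbb{H}$ and pick $a > 0$ so that $B \subset S_a$. Since $B \subset S_a$ we have $\tau_B \le \tau_{S_a}$, and the strong Markov property at $\tau_B$ gives
\formula{
h(x) = \mathbb{E}^x h(X_{\tau_{S_a}}) = \mathbb{E}^x \bigl[ \mathbb{E}^{X_{\tau_B}} h(X_{\tau_{S_a}}) \bigr] = \mathbb{E}^x h(X_{\tau_B}).
}
The last equality splits by cases: on $\{X_{\tau_B} \in S_a\}$ the regular harmonicity of $h$ in $S_a$ already proved gives $\mathbb{E}^{X_{\tau_B}} h(X_{\tau_{S_a}}) = h(X_{\tau_B})$, while on $\{X_{\tau_B} \notin S_a\}$ one has $\tau_{S_a} = 0$ starting from $X_{\tau_B}$ and the identity is trivial.

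The entire probabilistic content is packaged in Theorem~\ref{thm:one-dim}; the only routine items to verify are integrability of $h(X_{\tau_B})$ (which holds because $\beta(u) < \alpha$ and $\nu$ has the polynomial tail $|z|^{-d-\alpha}$, so the Ikeda--Watanabe formula yields finiteness for the bounded $B$) and the almost-sure disjointness of $\{X_{\tau_B} \in S_a\}$ from $\{X_{\tau_B} \in \partial S_a\}$, which is immediate from the absolute continuity of the L\'evy measure. I do not foresee a serious obstacle; the main thing to watch is the correct bookkeeping of the strong Markov step, so that the two cases above really do combine into the claimed equality.
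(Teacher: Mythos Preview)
Your proposal is correct and follows essentially the same route as the paper: reduce by an isometry to $u=e_d$, $z=0$, use the projection $X^u$ and Theorem~\ref{thm:one-dim} to establish regular harmonicity of $h$ in a strip $\{0<x_d<a\}$ containing $D\cap\mathbb{H}$, and then pass down to $D\cap\mathbb{H}$. The only difference is cosmetic: the paper invokes the general fact (stated right after the definition of regular harmonicity) that regular harmonicity is inherited by open subsets, whereas you spell out the underlying strong Markov argument explicitly.
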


\begin{proof}
\mk{By an appropriate} change the coordinate system \mk{and scaling, we may assume} that $z=0$ and $u=(0,...,0,1)$. Let $D$ be a bounded open set. Let $h(x_d)$ be defined as in Theorem \ref{thm:one-dim} for $Y = X^u$. We define $U_t =\{z \in \mathbb{R}^d: 0 < x_d < t \}$ and \mk{we choose} $t$ such that $D \cap \mathbb{H}_{u,z} \subset U_t$. We have
\formula[eq:harm_proj]{
\mathbb{E}^x h_{u,z}(X_{\tau_{\mk{U}_t}}) = \mathbb{E}^{x_d} h(X^u_{\tau_{(0,t)}}) = h(x_d) = h_{u,z}(x),
} 
where $\mathbb{E}^x$ and $\mathbb{E}^{x_d}$ are the expectations for the $d$-dimensional process \mk{$X$} and its orthogonal projection \mk{$X^u$,} respectively. By \eqref{eq:harm_proj}\mk{,} the function $h_{u,z}$ is regular harmonic in $\mk{U}_t$. Since $D \cap \mathbb{H}_{u,z} \subset \mk{U}_t$, $h_{u,z}$ is also regular harmonic in $D \cap \mathbb{H}_{u,z}$.  
\end{proof}

\begin{corollary}
\label{cor:dynkin_pionwise}
\mk{The} function $\mathcal{A}h_{u,z}(x)$ is well defined for every $x \in \mathbb{H}_{u,z}$ and 
\formula{
	\mathcal{A}h_{u,z}(x) = 0
}
for $x \in \mathbb{H}_{u,z}$.
\end{corollary}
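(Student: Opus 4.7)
My plan is to split the proof into two parts: (i) verifying that the integrand in \eqref{eq:generator} is absolutely integrable at every $x \in \mathbb{H}_{u,z}$, so that $\mathcal{A}h_{u,z}(x)$ is well defined; (ii) showing that this integral vanishes by combining the regular harmonicity of $h_{u,z}$ (Lemma \ref{tw:harmonicznosc_w_R^d}) with a Dynkin-type formula.

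For (i), fix $x \in \mathbb{H}_{u,z}$ and choose $\rho > 0$ so that $\overline{B(x,\rho)} \subset \mathbb{H}_{u,z}$. I would split the integration domain into $B(x,\rho)$ and its complement. On the small ball $h_{u,z}$ is $C^\infty$, and the standard Taylor-expansion bound on the integrand (first-order difference for $\alpha < 1$; second-order difference for $\alpha > 1$; and a symmetric second-order estimate combined with symmetry of $\nu$ when $\alpha = 1$) matches the singularity $\nu(y-x) \leq c|y-x|^{-d-\alpha}$. On the complement, $h_{u,z}(y) \leq (|y|+|z|)^{\beta(u)}$, and since $\beta(u) \leq \beta_{max} < \min\{\alpha,1\}$ by Lemma \ref{lem:beta_ogr}, the tail $|y|^{\beta(u)-d-\alpha}$ is integrable; the constant term $h_{u,z}(x)\nu(y-x)$ and, where applicable, the linear term $\langle \nabla h_{u,z}(x), y-x\rangle \nu(y-x)$ are tail-integrable for any $\alpha > 0$.

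For (ii), I would apply Lemma \ref{tw:harmonicznosc_w_R^d} with $D = B(x, r)$ for $r < \rho$, so that $D \cap \mathbb{H}_{u,z} = B(x,r)$, giving $\mathbb{E}^x h_{u,z}(X_{\tau_{B(x,r)}}) = h_{u,z}(x)$. I would then invoke the Dynkin identity
\formula{
\mathbb{E}^x h_{u,z}(X_{\tau_{B(x,r)}}) - h_{u,z}(x) = \mathbb{E}^x \int_0^{\tau_{B(x,r)}} \mathcal{A}h_{u,z}(X_s) ds.
}
Since the left-hand side is zero, dividing by $\mathbb{E}^x \tau_{B(x,r)}$ and letting $r \to 0^+$ would yield $\mathcal{A}h_{u,z}(x) = 0$, after noting that $y \mapsto \mathcal{A}h_{u,z}(y)$ is continuous at $x$ (itself a consequence of the local smoothness of $h_{u,z}$ near $x$, together with dominated convergence applied to the tail piece using the same bounds as in (i)).

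The main obstacle is justifying this Dynkin identity for the unbounded function $h_{u,z}$, as the paper's formalism deals with generators of functions in $C_0$. I would handle this by truncation: write $h_{u,z} = h_{u,z}\chi_M + h_{u,z}(1-\chi_M)$ with $\chi_M$ a smooth cutoff equal to $1$ on $B(0,M)$ and supported in $B(0,2M)$, apply the standard Dynkin formula to $h_{u,z}\chi_M \in C_c$, and then let $M \to \infty$, invoking the tail estimates from (i) to pass to the limit on both sides. The matching between $\beta(u) < \alpha$ and the Lévy tail $|y|^{-d-\alpha}$ supplied by Lemma \ref{lem:beta_ogr} is precisely what makes this truncation argument go through.
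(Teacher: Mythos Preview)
Your overall strategy (verify integrability of the integrand in \eqref{eq:generator}, then combine regular harmonicity with a Dynkin-type identity) matches the paper's, and part (i) is correct and in fact supplies the tail estimate the paper uses implicitly. The difference lies in how the Dynkin step is justified, and your specific truncation has a subtle gap. Writing $h_{u,z}=h_{u,z}\chi_M+h_{u,z}(1-\chi_M)$ handles the growth at infinity but not the lack of smoothness on $\partial\mathbb{H}_{u,z}$: the function $h_{u,z}\chi_M$ has only $C^{\beta(u)}$ regularity across that hyperplane (with $\beta(u)<1$), so it is not obviously in the domain of the Feller generator, and the ``standard Dynkin formula'' does not apply to it without further work (an additional mollification, say). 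The paper sidesteps this by decomposing \emph{locally around $x$} rather than at infinity: it picks $h^*_{u,z}\in C_c^\infty$ agreeing with $h_{u,z}$ on $B(x,r)$, so that $\mathcal{L_D}h^*_{u,z}(x)=\mathcal{A}h^*_{u,z}(x)$ holds automatically; the remainder $h_{u,z}-h^*_{u,z}$ vanishes on $B(x,r)$, so its Dynkin characteristic operator at $x$ is computed directly via the Ikeda--Watanabe formula and equals $\int_{B(x,r)^c}\nu(v-x)(h_{u,z}-h^*_{u,z})(v)\,dv=\mathcal{A}(h_{u,z}-h^*_{u,z})(x)$. Adding the two pieces and using $\mathcal{L_D}h_{u,z}(x)=0$ (Remark~\ref{rem:dynkin_harmonic}) gives the result. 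In short, the paper's split puts the smooth piece in the domain and makes the rough piece vanish near $x$, whereas your split leaves a piece that is neither smooth nor vanishing near the boundary hyperplane.
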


\begin{proof}
Let $x$ be a point in $\mathbb{H}_{u,z}$ and \mk{let} $r$ be a radius such that $B(x,r) \subset \mathbb{H}_{u,z}$. \mk{The} function $h_{u,z}$ belongs to $C^{\infty}(B(x,r))$. Since it is harmonic in $\mathbb{H}_{u,z}$, by Remark \ref{rem:dynkin_harmonic} it belongs to the domain of the Dynkin generator $\mathcal{L_D}$ at the point $x$ and $\mathcal{L_D}h_{u,z}(x)=0$. We define \mk{the} function $h^*_{u,z}$ by \mk{the} formula
\formula{
	h^*_{u,z}(y) = h_{u,z}(y) 
}
for $y \in B(x,r)$ and extend it to a smooth and compactly supported function. \mk{The function} $h^*_{u,z}$ also belongs to the \mk{domain} of \mk{the} Dynkin generator $\mathcal{L_D}$, \mk{as well as to} the domain of \mk{the} pointwise \mk{generator} $\mathcal{A}$, and
\formula[eq:dynkin_pointwise1]{
\mathcal{L_D}h^*_{u,z}(x) = \mathcal{A}h^*_{u,z}(x).
} 
The difference $h_{u,z}(x) - h^*_{u,z}(x)$ is equal to $0$ on $B(x,r)$, thus, by \mk{the} Ikeda--Watanabe formula, we have
\formula{
\mathcal{L_D}(h_{u,z} - h^*_{u,z})(x) = \lim_{s \to 0^+} \frac{\int_{B(x,s)} G_{B(x,s)}(x,y)\int_{B(x,r)^c} \nu(\mk{v}-y)(h_{u,z} - h^*_{u,z})(\mk{v})d\mk{v}dy}{\int_{B(x,s)} G_{B(x,s)}(x,y)dy}.
}
where \mk{$G_{B(x,s)}(x,z)$} is the Green function of $B(x,s)$.
\mk{Observe that if $y \in B(x, r/2)$ and $v \in B(x, r)^c$, we have $\abs{v-y}>r/2$}, and hence $\nu(\mk{v}-y)(h_{u,z} - h^*_{u,z})$ \mk{is a continuous function of $y \in B(x, r/2)$ and $v \in B(x, r)^c$, bounded by an integrable function of $v \in B(x, r)^c$ uniformly with respect to $y \in B(x, r/2)$. It follows} that $\int_{B(x,r)^c} \nu(z-y)(h_{u,z} - h^*_{u,z})(z)dz$ is a continuous function of \mk{$y \in B(x, r/2)$}. As $s \to 0$\mk{,} the measures $\frac{G_{B(x,s)}(x,y)dy}{\int_{B(x,s)} G_{B(x,s)}(x,y)dy}$ converge vaguely to the Dirac measure at the point $x$, thus we have
\formula[eq:dynkin_pointwise2]{
\mathcal{L_D}(h_{u,z} - h^*_{u,z})(x) = \int_{B(x,r)^c} \nu(z-\mk{x})(h_{u,z} - h^*_{u,z})(z)dz = \mathcal{A}(h_{u,z} - h^*_{u,z})(x).
} 
By combining \eqref{eq:dynkin_pointwise1} with \eqref{eq:dynkin_pointwise2} we get the desired result.
\end{proof}

\smallskip
\subsection{Regularity of $D$}

\begin{assumption}
\label{as:D}
If $\alpha \in (0,1)$\mk{,} we assume that $D$ is a bounded $C^{1,1}$ open set. If $\alpha \in [1,2)$\mk{,} we assume that $D$ is a bounded $C^{2,\alpha+\epsilon-1}$ set for some $\epsilon > 0$. 
\end{assumption}

\begin{remark}
\label{rem:ball_conditions}
If $D$ is a $C^{1,1}$ open set, it satisfies the uniform exterior and the uniform interior ball conditions: for some $\mk{r(D)}>0$, for every $z \in \partial D$ there are points $x_1,x_2$ such that $B(x_1,r) \subset D$, $B(x_2,r) \subset D^c$ and $z \in \overline{B}(x_1,r) \cap \overline{B}(x_2,r)$.
\end{remark}

Recall that $D^{*}_{r}=\{x \in D: \delta_D(x) < r\}$.

\begin{definition}
Let $r$ be as in Remark \ref{rem:ball_conditions}. For $x \in D^{*}_{r}$, we let $z(x)$ to be the unique point on $\partial D$ such that $\delta_{D}(x) = \abs{x - z(x)}$. If $x \in \partial D$ we define $z(x) = x$. We define $n(x)$ to be the inward-pointing normal vector to the boundary of $D$ at point $z(x)$.
\end{definition}

\begin{lemma}
\label{lem:rad_for_lip}
Let $D$ satisfy Assumption \ref{as:D}. There exists \mk{$R(D) > 0$} such that the functions $z(x)$ and $n(x)$ are Lipschitz continuous functions on $\overline{D^{*}_{R}}$ in case $\alpha < 1$ and $C^{\alpha + \epsilon}\mk{(\overline{D^*_R})}$ class functions for some $\epsilon > 0$ if $\alpha \geq 1$.
\end{lemma}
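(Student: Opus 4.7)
I would split the proof according to the regularity of $D$ and reduce everything to classical facts about the nearest-point projection onto a smooth hypersurface.

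\smallskip

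\emph{Case $\alpha < 1$, $D \in C^{1,1}$.} By Remark~\ref{rem:ball_conditions}, $D$ satisfies the uniform interior and exterior ball conditions with some radius $r(D) > 0$. It is a classical fact (see, e.g., Federer's tubular neighbourhood theorem, or Delfour--Zolésio, \emph{Shapes and Geometries}, Chapter~7) that the two-sided ball condition is equivalent to the signed distance function $\delta$ (positive in $D$, negative outside) being of class $C^{1,1}$ on $\{|\delta| < r(D)\}$, where $\nabla\delta$ is just the inward unit normal at the nearest boundary point. Thus for any $R \in (0, r(D))$ one has $n(x) = \nabla\delta(x)$ Lipschitz on $\overline{D^*_R}$, and
\formula{
z(x) = x - \delta_D(x)\, n(x)
}
is Lipschitz on $\overline{D^*_R}$ as well.

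\smallskip

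\emph{Case $\alpha \geq 1$, $D \in C^{2,\gamma}$ with $\gamma = \alpha - 1 + \epsilon$.} Fix $z_0 \in \partial D$. By Definition~\ref{def:c11}, after an isometry of $\mathbb{R}^d$ chosen so that $z_0 = 0$ and the tangent plane at $z_0$ is horizontal (so $\nabla f(0) = 0$), a neighbourhood of $z_0$ in $\partial D$ is the graph $\{(\tilde y, f(\tilde y))\}$ with $f \in C^{2,\gamma}$. For $x = (\tilde x, x_d)$ in a small tubular neighbourhood, the nearest point $z(x) = (\tilde y, f(\tilde y))$ is characterised by orthogonality of $x - z(x)$ to every tangent vector $(e_i, \partial_i f(\tilde y))$, i.e.\ by the system
\formula{
F_i(\tilde x, x_d, \tilde y) := (\tilde x_i - \tilde y_i) + (x_d - f(\tilde y))\, \partial_i f(\tilde y) = 0, \qquad i = 1, \dots, d-1.
}
One checks that $\partial F/\partial \tilde y$ evaluated at $(\tilde x, x_d, \tilde y) = (0, 0, 0)$ equals $-I$, which is invertible. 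Since $F$ is jointly of class $C^{1,\gamma}$ in all of its variables (as $\nabla f \in C^{1,\gamma}$), the Hölder version of the implicit function theorem yields $\tilde y$ as a $C^{1,\gamma}$ function of $(\tilde x, x_d)$ on some neighbourhood of $z_0$. Hence $z(x) = (\tilde y(x), f(\tilde y(x)))$ is in $C^{1,\gamma} = C^{\alpha+\epsilon}$ there, and
\formula{
n(x) = \frac{(-\nabla f(\tilde y(x)),\, 1)}{\sqrt{1 + |\nabla f(\tilde y(x))|^2}}
}
is a composition of $C^{1,\gamma}$ functions, hence $C^{\alpha+\epsilon}$. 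Finally, cover the compact set $\partial D$ by finitely many such charts and take $R(D)$ smaller than all the corresponding local tubular radii (and smaller than $r(D)$); the regularity estimates piece together on $\overline{D^*_R}$.

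\smallskip

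\textbf{Main obstacle.} The only real subtlety is that in the $C^{1,1}$ case a direct application of the implicit function theorem to the orthogonality system $F = 0$ would only recover a Lipschitz projection \emph{without} exploiting the second-order nature of $C^{1,1}$ data, and more importantly it is delicate to obtain the \emph{sharp} $C^{1,1}$ regularity of the signed distance (rather than mere semi-concavity) from a one-sided condition; the clean way around this is to invoke the two-sided ball condition of Remark~\ref{rem:ball_conditions}, which packages exactly the right input for the signed distance to be $C^{1,1}$ on a tubular neighbourhood. Everything else is routine compactness and composition of Hölder maps.
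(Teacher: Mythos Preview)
Your argument is correct. In the $\alpha<1$ case you follow exactly the paper's route: invoke the two-sided ball condition, quote the $C^{1,1}$ regularity of the (signed) distance from Delfour--Zol\'esio, read off $n=\nabla\delta_D$ as Lipschitz, and write $z(x)=x-\delta_D(x)\,n(x)$.

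In the $\alpha\geq 1$ case the paper proceeds uniformly: it applies the same Delfour--Zol\'esio result once more to obtain $\delta_D\in C^{2,\alpha-1+\epsilon}$ on a tubular neighbourhood, and then deduces $n=\nabla\delta_D\in C^{\alpha+\epsilon}$ and $z=x-\delta_D\,\nabla\delta_D\in C^{\alpha+\epsilon}$. You instead bypass the distance function and go directly at the nearest-point projection via the H\"older implicit function theorem applied to the orthogonality system $F_i(\tilde x,x_d,\tilde y)=0$ in local graph coordinates, then recover $n$ from the graph normal formula. Both approaches give exactly $C^{1,\alpha-1+\epsilon}=C^{\alpha+\epsilon}$; the paper's version is shorter and treats both regimes by a single citation, while yours is more self-contained and makes the mechanism (invertibility of $\partial F/\partial\tilde y=-I$ at the base point, $C^{1,\gamma}$ dependence of $F$) explicit. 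Your ``main obstacle'' paragraph is also on target: the two-sided ball condition is precisely what packages the sharp $C^{1,1}$ regularity of the signed distance in the low-regularity case.
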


\begin{proof}
By Theorem 3.1 in \cite{bib:dz95} we get that the distance function $\delta_D(x) $ is in $C^{1,1}(\overline{D^{*}_{R}})$ for some $R$. Note that we have $\nabla \delta_D(x) = n(x)$, thus $n(x)$ is a Lipschitz function on $\overline{D^{*}_{R}}$. Since $z(x) = x - \delta_D(x) \nabla \delta_D(x)$, it is also a Lipschitz continuous function on $\overline{D^{*}_{R}}$. When $\alpha \geq 1$ and $D$ is a $C^{2,\alpha-1 + \epsilon}$ class set for some $\epsilon > 0$ then, again by Theorem 3.1 in \cite{bib:dz95}, $\delta_D(x)$ is in $C^{2,\alpha-1 + \epsilon}(\mk{\overline{D_R^*}})$, thus $\nabla \delta_D$ \mk{and} $z$ \mk{are in} $C^{\alpha + \epsilon}(\mk{\overline{D_R^*}})$.
\end{proof}

\begin{remark}
\label{rem:set_coordinate_system}
Since the \mk{harmonic functions for the} process $X$ \mk{are} scale-invariant, the constants $\beta_{max}, \beta_{min}$ and \mk{the} function $h_{u,z}$ that will be used later in this article do not change if we scale the process $X$ or (equivalently) scale \mk{the coordinate system}. To simplify \mk{the notation,} till the end of the article we \mk{choose a coordinate system, together with its scale, in such a way} that $0 \in \partial D$, 
\formula{
	e_d := n(0) = (0, \dots ,0,1) ,
}
\mk{the radius $r$ defined in Remark~\ref{rem:ball_conditions} is not less that $2$, and} the radius $R$ defined in Lemma \ref{lem:rad_for_lip} is \mk{greater than or equal to} $1$.
\end{remark}
	
\begin{corollary}
\label{cor:beta_holder}
Let $D$ satisfy Assumption \ref{as:D}. The function $\beta(n(x))$ is in $ C^{\alpha+\epsilon}(\overline{D^{*}_{1}})$ for some $\epsilon>0$ and $\norm{\beta(n(\cdot))}_{C^{\alpha+\epsilon}(\overline{D^{*}_{1}})} \leq C(X,D)$ for some $C(X,D) >0$.
\end{corollary}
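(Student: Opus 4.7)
The plan is to derive this composition result from the chain rule, by combining the H\"older regularity of $\beta$ on the sphere (Remark~\ref{rem:beta_holder}) with the H\"older regularity of $n$ on $\overline{D^*_R}$ (Lemma~\ref{lem:rad_for_lip}). Since Remark~\ref{rem:set_coordinate_system} guarantees $R \geq 1$, the inclusion $\overline{D^*_1} \subset \overline{D^*_R}$ gives direct access to the regularity of $n$ where we need it. I would pick $\epsilon > 0$ small enough that all the H\"older exponents provided by these two results are at least $\alpha + \epsilon$, so that both $\beta$ and $n$ are in class $C^{\alpha+\epsilon}$ on their respective domains.

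In the case $\alpha < 1$, I would further shrink $\epsilon$ so that $\alpha + \epsilon < 1$. Then $n$, being Lipschitz on $\overline{D^*_1}$, lies in $C^{0, \alpha+\epsilon}$ there, and the composition reduces to the elementary estimate
\formula{
\abs{\beta(n(x)) - \beta(n(y))} \leq \norm{\beta}_{C^{\alpha+\epsilon}(S^d)} \operatorname{Lip}(n)^{\alpha+\epsilon} \abs{x-y}^{\alpha+\epsilon},
}
yielding the H\"older bound with a constant depending only on $X$ and $D$.

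For $\alpha \geq 1$, I would choose $\epsilon$ so additionally that $\alpha + \epsilon < 2$; writing $\alpha + \epsilon = 1 + \gamma$ with $\gamma \in (0,1)$, both $\beta$ and $n$ become $C^{1, \gamma}$ on their respective domains. The chain rule gives
\formula{
\nabla(\beta \circ n)(x) = (D n(x))^T \, \nabla \beta(n(x)),
}
and each factor on the right is bounded. The composition $\nabla \beta \circ n$ is $C^{0, \gamma}$ because $\nabla \beta \in C^{0, \gamma}(S^d)$ while $n$ is at least Lipschitz, and $D n$ itself is $C^{0, \gamma}$. Since products of bounded $C^{0, \gamma}$ functions are $C^{0, \gamma}$, the gradient of $\beta \circ n$ is $C^{0, \gamma}$, i.e., $\beta \circ n \in C^{1, \gamma}(\overline{D^*_1}) = C^{\alpha+\epsilon}(\overline{D^*_1})$.

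The quantitative norm bound tracks the dependencies in the estimates above: all constants are controlled by $\norm{\beta}_{C^{\alpha+\epsilon}(S^d)}$ (which depends on $X$ through $\vartheta$) and by $\norm{n}_{C^{\alpha+\epsilon}(\overline{D^*_1})}$ (which depends on $D$), giving the constant $C(X, D)$. I do not expect a genuine obstacle here; the only subtlety is the routine bookkeeping between the two regimes $\alpha < 1$ and $\alpha \geq 1$, since the source of regularity of $n$ changes between them and one must verify that the H\"older exponents produced by Remark~\ref{rem:beta_holder} and Lemma~\ref{lem:rad_for_lip} can be simultaneously chosen to exceed $\alpha + \epsilon$.
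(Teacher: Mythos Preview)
Your proposal is correct and follows essentially the same approach as the paper: split into the cases $\alpha<1$ and $\alpha\geq 1$, invoke Remark~\ref{rem:beta_holder} for the regularity of $\beta$ and Lemma~\ref{lem:rad_for_lip} for the regularity of $n$, and conclude by the composition rule for H\"older classes. The paper's proof is terser, simply asserting that the composition of a $C^{\alpha+\epsilon}$ map with a Lipschitz (respectively $C^{\alpha+\epsilon}$) map is $C^{\alpha+\epsilon}$, whereas you spell out the chain-rule computation; but the argument is the same.
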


\begin{proof}
The function $\beta(n(x))$ is a composition of functions $\beta$ and $n(x)$. If $\alpha<1$, by Remark~\ref{rem:beta_holder} and Lemma~\ref{lem:rad_for_lip} we have $\beta \in C^{\alpha+\epsilon}(S^d)$ for some $\epsilon > 0$ and $n(x)$ is a Lipschitz continuous function, thus their composition belongs to $C^{\alpha+\epsilon}(\overline{D^{*}_{1}})$ for some $\epsilon > 0$. If $\alpha \geq 1$, by  Remark~\ref{rem:beta_holder} and Lemma~\ref{lem:rad_for_lip} we have $\beta \in C^{\alpha+\epsilon}(S^d)$ and $n(x)\in C^{\alpha+\epsilon}(\overline{D^{*}_{1}})$, thus their composition belongs to $C^{\alpha+\epsilon}(\overline{D^{*}_{1}})$.
\end{proof}

To simplify the notation, we write $\beta(x)$ instead of $\beta(n(x))$ if $x \in \mk{\overline{D^*_1}}$.

Recall that $D_r = D \cap B(0,r)$.

\begin{lemma}
\label{lem:odl^2}
Let $D$ satisfy Assumption \ref{as:D}. For every $x = (\tilde{x},x_d) \in \overline{D}_{\mk{1}}$\mk{,} we have
	\formula[eq:odl^2]{
		\abs{\delta_D(x) - x_d} \leq \mk{\tfrac{1}{2}} \abs{\tilde{x}}^2.
	} 
\end{lemma}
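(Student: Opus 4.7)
The plan is to exploit the uniform interior and exterior ball conditions from Remark~\ref{rem:ball_conditions}, together with the normalization fixed in Remark~\ref{rem:set_coordinate_system} (so that $0 \in \partial D$, $n(0) = e_d$, and the ball radius satisfies $r \ge 2$). With this normalization, the interior tangent ball is $B_+ := B(re_d, r) \subset D$ and the exterior tangent ball is $B_- := B(-re_d, r) \subset D^c$. Since $x \in \overline{D}_1$, we have $|\tilde{x}|\le 1$ and $|x_d|\le 1$; this, combined with $r \ge 2$, is what will make the coefficient $\tfrac12$ come out.

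For the upper bound $\delta_D(x) \le x_d + \tfrac12 |\tilde{x}|^2$, I would project the point $x$ radially from the center $-re_d$ of the exterior ball onto $\partial B_-$. The resulting point $z_0 = -re_d + r\,(x+re_d)/|x+re_d|$ lies in $\overline{B_-}\subset D^c$, so
\formula{
\delta_D(x) \le |x - z_0| = |x + re_d| - r = \sqrt{|\tilde{x}|^2 + (x_d+r)^2} - r.
}
Squaring the desired inequality $\sqrt{|\tilde{x}|^2 + (x_d+r)^2} \le (x_d+r) + \tfrac12 |\tilde{x}|^2$ (both sides positive since $x_d + r \ge 1$) reduces it to $(1-(x_d+r))|\tilde{x}|^2 \le \tfrac14|\tilde{x}|^4$, which holds because $x_d + r \ge 1$.

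For the lower bound $\delta_D(x) \ge x_d - \tfrac12 |\tilde{x}|^2$, I would split into two cases. If $x \in B_+$, then $B_+ \subset D$ implies $\delta_D(x) \ge r - |x - re_d| = r - \sqrt{|\tilde{x}|^2 + (x_d-r)^2}$, and the same kind of squaring argument (using $r - x_d \ge 1$) yields the bound. If $x \notin B_+$, then $|x - re_d|^2 \ge r^2$ gives $x_d \le (|\tilde{x}|^2 + x_d^2)/(2r)$; one quickly deduces $x_d \le \tfrac{|\tilde{x}|^2}{3}$ from $r \ge 2$ and $|x_d|\le 1$, so $x_d - \tfrac12|\tilde{x}|^2 \le 0 \le \delta_D(x)$ and the bound is automatic.

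The only step requiring any real care is checking that the algebraic constants work out with the stated coefficient $\tfrac12$ rather than something like $1/r$; this is where the normalization $r \ge 2$ coming from Remark~\ref{rem:set_coordinate_system} is used crucially, both to make the relevant expressions $x_d \pm r$ have a definite sign and to absorb the quadratic error terms. The rest is elementary.
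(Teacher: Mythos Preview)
Your proposal is correct and follows essentially the same approach as the paper: both arguments use the interior and exterior tangent balls at $0$ (with radius $r\ge 2$ from Remark~\ref{rem:set_coordinate_system}) together with elementary square-root inequalities to sandwich $\delta_D(x)$ between $x_d \pm \tfrac12|\tilde{x}|^2$. The only cosmetic differences are that the paper routes the upper bound through the local boundary graph $f$ (showing $f(\tilde{x})\ge -\tfrac12|\tilde{x}|^2$ from the exterior ball and then $\delta_D(x)\le x_d - f(\tilde{x})$), whereas you compute the distance to $\partial B_-$ directly; and the paper leaves the case $x\notin B_+$ implicit (the signed expression $2-\sqrt{\cdots}$ is then negative), whereas you treat it explicitly.
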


\begin{proof}
Let $f$ be a function such that $\partial D \cup B(0,\mk{2})$ is contained in the graph of $f$ (see Definition \ref{def:c11}). By the uniform exterior ball condition with radius \mk{$2$,} we have
\formula{
	f(\tilde{x}) \geq - \mk{2} + \sqrt{\mk{4} - \abs{\tilde{x}}^2} \geq -\mk{\tfrac{1}{2}} \abs{\tilde{x}}^2
}
for $\abs{\tilde{x}} \le \mk{2}$. Thus\mk{,} for $x \in \mk{\overline{D_2}}$,
\formula[eq:odl^2_1]{
	\delta_D(x) \leq d(x,(\tilde{x},f(\tilde{x}))) = \abs{x_d - f(\tilde{x})} = x_d - f(\tilde{x}) \leq x_d + \tfrac{1}{2} \abs{\tilde{x}}^2.
}
On the other hand, by the fact that for $\abs{x} \le \mk{1}$ we have $\mk{2} - x_d + \mk{\tfrac{1}{2}} \abs{\tilde{x}}^2 \geq \sqrt{\abs{\tilde{x}}^2 + (\mk{2}-x_d)^2}$ (which follows by squaring both sides of the inequality) and by the uniform interior ball condition, we have
\formula[eq:odl^2_2]{
	\delta_D(x) \geq \delta_{B(\mk{2}e_d,\mk{2})}(x) = \mk{2} - \sqrt{\abs{\tilde{x}}^2 + (\mk{2}-x_d)^2} \geq x_d - \mk{\tfrac{1}{2}} \abs{\tilde{x}}^2 .
}
	By combining \eqref{eq:odl^2_1} and \eqref{eq:odl^2_2}, we get \eqref{eq:odl^2}.
\end{proof}

\section{Proof of the main theorem}
\label{sec:proof}
The main goal of this section is to provide explicit decay rates of harmonic functions at a boundary point $z$ of the set $D$. In the remaining part of the article we will always assume that the process $X$ satisfies Assumption \ref{as:X} and \mk{the} set $D$ satisfies Assumption \ref{as:D}. We choose \mk{the} coordinate system \mk{(and scaling)} as in Remark \ref{rem:set_coordinate_system} and \mk{we fix} $x_0 = (0,0, \dots ,x_{0d}) = x_{0d}e_d$ such that $\delta_D(x_0) \leq 1/2$. \mk{Finally,} we define $\mathbb{H}(x) = \mathbb{H}_{e_d,0}(x)$ and $h(x) = h_{e_d,0}(x)$.

\begin{definition}
We define the function $g$ by formula
\formula{
g(x) = (\delta_D(x))^{\beta(x)} \mathbbm{1}_{ \overline{D^*_1}}(x),
}
where the function $\beta$ is given in Corollary \ref{cor:beta_holder}\mk{; see Figure~\ref{fig:g}}.
\end{definition}

\begin{figure}
\centering
\includegraphics[width=1\textwidth]{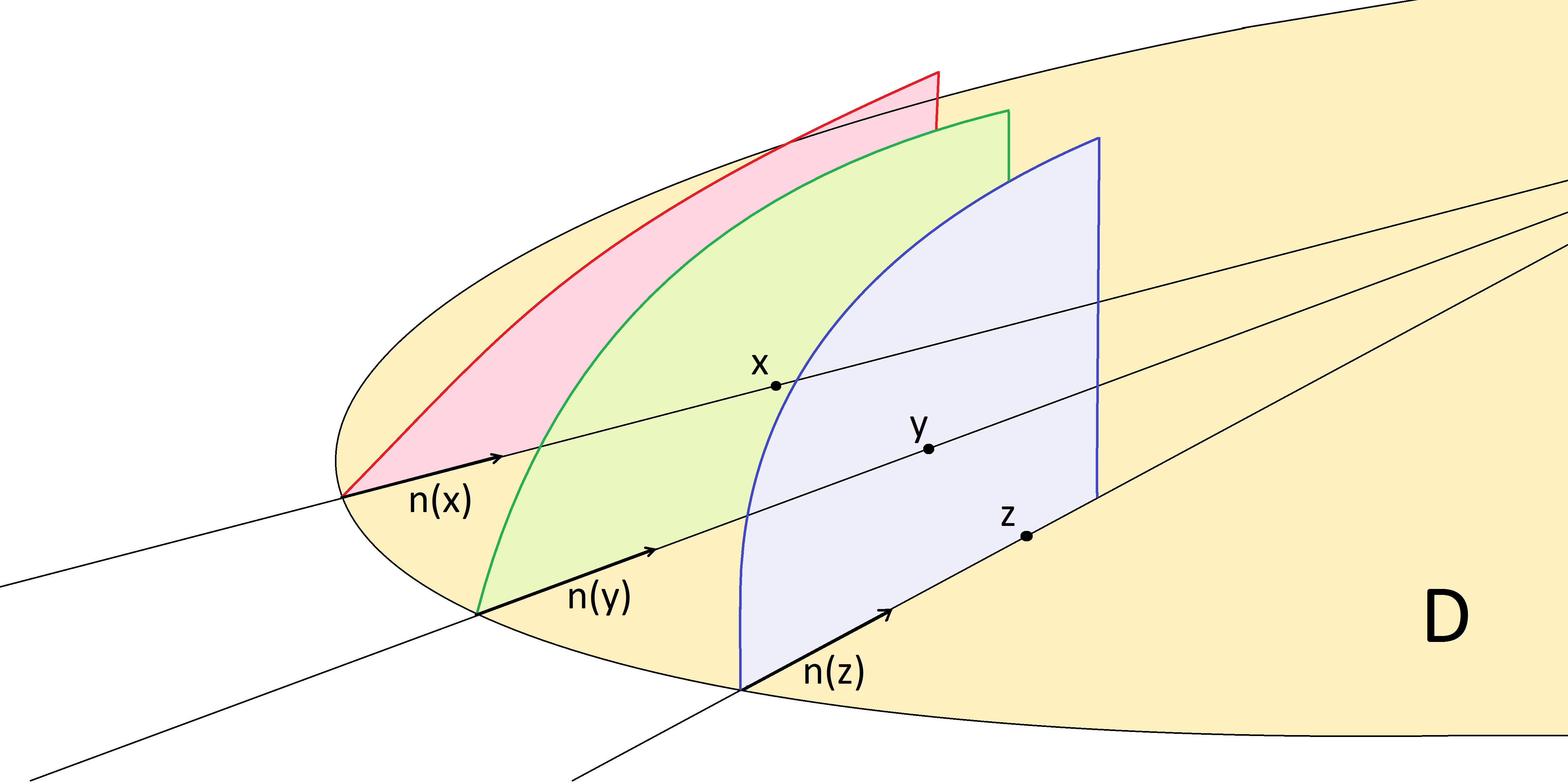}
\caption{The function $g$ is of power type with different exponents for different directions.}
\label{fig:g}
\end{figure}

\begin{remark}
\label{rem:g_Hol}
By Lemma \ref{lem:rad_for_lip} and Corollary \ref{cor:beta_holder}, $g \in C^{\alpha + \epsilon}(\overline{D^*_{1}})$ for some $\epsilon > 0$. Moreover, $g$ is bounded by $1$.
\end{remark}

\begin{remark}
\label{rem:k}
By Lemma \ref{lem:beta_ogr}, the interval $(\beta_{max},\min\{\alpha,1\})$ is non-empty. In the remaining part of this article we fix 
\formula{
\eta \in (\beta_{max},\min\{\alpha,1\}),
}
so that, by the fact that $\beta_{min} + \beta_{max} = \alpha$, we have
\formula{
2\eta - \alpha - 2 < -1\mk{,} &&\beta_{min} + \eta - \alpha > 0\mk{,} && \beta_{min} - \eta > - 1\mk{,} &&2\eta - \alpha - 1 > -1\mk{,}
}
which we will use later in this article.
\end{remark}

\begin{definition}
\label{def:split}
We define
	\formula[eq:split]{
		\begin{aligned}
			f_{1}(x) &= ((\delta_{\mathbb{H}}(x))^{\beta(x)} - (\delta_{\mathbb{H}}(x))^{\beta(x_0)})\mathbbm{1}_{D_1 \cap \mathbb{H}}(x)\mk{,} \\
			f_{2}(x) &= ((\delta_D(x))^{\beta(x)} - (\delta_{\mathbb{H}}(x))^{\beta(x)})\mathbbm{1}_{D_1 \cap \mathbb{H}}(x)\mk{,} \\
			f_3(x) &= (g(x) - h(x))\mathbbm{1}_{ (\mk{\mathbb{R}^d} \setminus (D_1 \cap \mathbb{H}))}(x).
		\end{aligned}
	}
\end{definition}

Since for $x \in D_{1} \cap \mathbb{H}$ we have
\formula{
	g(x) - h(x) &= (\delta_D(x))^{\beta(x)} - (\delta_{\mathbb{H}}(x))^{\beta(x_0)}  \\
	&= (\delta_D(x))^{\beta(x)} - (\delta_{\mathbb{H}}(x))^{\beta(x)} + (\delta_{\mathbb{H}}(x))^{\beta(x)} - (\delta_{\mathbb{H}}(x))^{\beta(x_0)},
}
by \eqref{eq:split},
\formula[eq:split2]{
g(x) - h(x) & = f_{1}(x) + f_{2}(x) + f_3(x).
}

\begin{lemma}
	\label{lem:f_1}	
	There exists $\epsilon = \epsilon(X,D) > 0$ and a constant $c = c(X,D)$ such that
	\formula{
		\abs{f_1(x)} \leq c\abs{x-x_0}^{\alpha + \epsilon}  && \text{ if } \alpha < 1\mk{,}\\
		\abs{f_1(x) - \langle \nabla f_1(x_0), x-x_0  \rangle}\leq c\abs{x-x_0}^{\alpha + \epsilon} && \text{ if } \alpha \geq 1
	}
	for $x \in D_{\mk{1}} \cap \mathbb{H}$.
\end{lemma}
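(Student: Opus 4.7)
The plan is to rewrite $f_1$ as
\formula{
 f_1(x) = x_d^{\beta(x)} - x_d^{\beta(x_0)} = (\beta(x) - \beta(x_0)) J(x),
}
where
\formula{
 J(x) = \int_0^1 x_d^{\beta(x_0) + s(\beta(x) - \beta(x_0))} \ln(x_d) \, ds,
}
and then to exploit the H\"older regularity of $\beta$ on $\overline{D^*_1}$ supplied by Corollary~\ref{cor:beta_holder}. Throughout, the crucial point is that the function $y \mapsto y^{\beta_{\min}} |\ln y|^k$ is uniformly bounded on $(0, 1]$ for each fixed $k$, which makes all the prefactors involving $\ln x_d$ or $\ln x_{0d}$ harmless.

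\textbf{Case $\alpha < 1$.} Shrink $\epsilon$ so that $\alpha + \epsilon < 1$; then $\beta$ is simply $(\alpha+\epsilon)$-H\"older continuous. By the elementary inequality \eqref{eq:oszacowanie2},
\formula{
 |f_1(x)| \le |\ln x_d| \max(x_d^{\beta(x)}, x_d^{\beta(x_0)}) |\beta(x) - \beta(x_0)|.
}
Since $x_d \in (0, 1]$ and both exponents lie in $[\beta_{\min}, \beta_{\max}]$, the maximum is bounded by $x_d^{\beta_{\min}}$, and the prefactor $|\ln x_d| x_d^{\beta_{\min}}$ is dominated by a constant depending only on $X$. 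The H\"older bound on $\beta$ completes the estimate.

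\textbf{Case $\alpha \ge 1$.} A direct differentiation gives $\nabla f_1(x_0) = J(x_0) \nabla \beta(x_0)$ with $J(x_0) = x_{0d}^{\beta(x_0)} \ln x_{0d}$, so that
\formula{
 f_1(x) - \langle \nabla f_1(x_0), x - x_0 \rangle = J(x_0) \bigl( \beta(x) - \beta(x_0) - \langle \nabla \beta(x_0), x - x_0 \rangle \bigr) + (\beta(x) - \beta(x_0))(J(x) - J(x_0)).
}
The first summand is controlled by applying Lemma~\ref{lem:lagrange} to $\beta \in C^{\alpha+\epsilon}$ (choosing $\epsilon$ so that $1 < \alpha + \epsilon < 2$), together with the uniform bound on $|J(x_0)|$. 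For the second summand, $|\beta(x) - \beta(x_0)|$ is $O(|x - x_0|)$, and it remains to prove $|J(x) - J(x_0)| \le C |x - x_0|^{\alpha + \epsilon - 1}$.

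The main obstacle is precisely this last estimate, because the integrand defining $J$ involves both the singular factor $\ln x_d$ and a varying exponent. The plan is to decompose $J(x) - J(x_0)$ into a contribution coming from the change $x_d \to x_{0d}$ at fixed exponent and a contribution from the change in the exponent at fixed $x_{0d}$, and to bound each using \eqref{eq:oszacowanie1}, \eqref{eq:oszacowanie2}, Corollary~\ref{cor:beta_holder}, and the uniform boundedness of $y^{\beta_{\min}} |\ln y|^k$ on $(0, 1]$. Note that when $|x - x_0|$ is bounded away from zero the claim follows trivially from the uniform boundedness of $f_1$ and of $\nabla f_1(x_0)$, so the delicate work is confined to the regime in which $|x - x_0|$ is small.
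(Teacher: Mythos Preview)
Your argument is correct and is essentially the paper's proof, reorganised. The case $\alpha<1$ is identical. For $\alpha\ge 1$, the paper adds and subtracts $\delta_1^{\beta_0}\ln\delta_1\,(\beta_1-\beta_0)$ and then $\delta_0^{\beta_0}\ln\delta_0\,\langle v,x-x_0\rangle$ to obtain three pieces; your two-term splitting via $J$ unpacks into exactly the same pieces once you carry out the plan for $J(x)-J(x_0)$: the ``change of exponent at fixed $x_d$'' part gives, after multiplying by $\beta(x)-\beta(x_0)$, precisely the paper's second-order Taylor term $\delta_1^{\beta_1}-\delta_1^{\beta_0}-\delta_1^{\beta_0}\ln\delta_1\,(\beta_1-\beta_0)=O(|x-x_0|^2)$, and the ``change $x_d\to x_{0d}$ at fixed exponent'' part is the paper's third term.

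The one spot where your sketch needs an extra sentence is this last piece, namely the bound
\[
 \bigl|x_d^{\beta_0}\ln x_d - x_{0d}^{\beta_0}\ln x_{0d}\bigr|\le c\,|x-x_0|^{\alpha-1+\epsilon}.
\]
Neither \eqref{eq:oszacowanie1} nor \eqref{eq:oszacowanie2} applies directly because of the logarithm, and ``boundedness of $y^{\beta_{\min}}|\ln y|^k$'' alone only gives a sup bound, not a H\"older increment. What is needed (and what the paper invokes for its third term) is that $y\mapsto y^{\beta_0}\ln y$ is $(\alpha-1+\epsilon)$-H\"older on $(0,1]$ with a constant uniform in $\beta_0\in[\beta_{\min},\beta_{\max}]$; this follows from $|\frac{d}{dy}(y^{\beta_0}\ln y)|\le c\,y^{\beta_0-1-\epsilon'}$ and $\beta_{\min}>\alpha-1$, which lets you choose $\epsilon,\epsilon'>0$ with $\alpha-1+\epsilon\le\beta_{\min}-\epsilon'$. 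With that observation added, your proof is complete and matches the paper's.
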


\begin{proof}
	Let $x \in D_{\mk{1}} \cap \mathbb{H}$. When $\alpha < 1$, by \eqref{eq:split} \mk{and} \eqref{eq:oszacowanie2} \mk{we have
	\formula{
		\abs{f_1(x)} \leq \abs{\ln(\delta_{\mathbb{H}}(x))} \max(\delta_{\mathbb{H}}(x)^{\beta(x)},\delta_{\mathbb{H}}(x)^{\beta(x_0)}) \abs{\beta(x) - \beta(x_0)} .
	}
	Since $\beta(x), \beta(x_0) \in [\beta_{min}, \beta_{max}]$, it follows that} there exists $c(X,D)$ such that
	\formula{
		\abs{f_1(x)} \leq c\abs{\beta(x)-\beta(x_0)}\mathbbm{1}_{D_1 \cap \mathbb{H}}(x).
	} 
	By Corollary~\ref{cor:beta_holder}\mk{, $\beta$ is in $C^{\alpha + \epsilon}(\overline{D_{1}^*})$ for some $\epsilon > 0$. Thus,} we have
	\formula{
		\abs{f_1(x)} \leq c\abs{x-x_0}^{\alpha + \epsilon} \mathbbm{1}_{D_1 \cap \mathbb{H}}(x)
	}
	for some $\epsilon > 0$\mk{, as desired}.
	
	\smallskip
We now concider $\alpha \geq 1$. For the \mk{notational} convenience, till the end of this Lemma we introduce the notation: $\beta_0 := \beta(x_0), \mk{\beta_1} := \beta(x), \delta_0 := \delta_{\mathbb{H}}(x_0),\mk{\delta_1} := \delta_{\mathbb{H}}(x)$ and $v := \nabla \beta(x_0)$.
By Corollary~\ref{cor:beta_holder}, \mk{$\beta$ is in $C^{\alpha + \epsilon}(\overline{D_{1}^*})$ for some $\varepsilon > 0$. In particular, $\nabla \beta$ exists and it is a function bounded by a constant $c(X, D)$}. By a simple calculation,
	\formula{
		\nabla f_1(x_0) = (\ln \delta_0)\delta_0^{\beta_0}v .
	}
	\mk{For later needs, we record that as a consequence, for} every $\mk{\epsilon} > 0$ there exists a constant $c(X,D,\mk{\epsilon})$ such that 
	\formula[rem:nabla_f1]{
		\abs{\nabla f_1(x_0)} = \abs{\ln(\delta_0)\delta_0^{\beta_0}v} \leq c\delta_0^{\beta_0-\mk{\epsilon}} = \mk{c \delta_{\mathbb{H}}(x_0)^{\beta(x_0) - \epsilon}}.
	}
\mk{We come back to the proof of the lemma. Observe that}
\formula[eq:f1alpha>1]{
\begin{aligned}
	\abs{f_1(x) - \langle \nabla f_1(x_0), x-x_0  \rangle}&= \abs{\delta_{\mk{1}}^{\beta_{\mk{1}}}-\delta_{\mk{1}}^{\beta_0}-\langle \delta_0^{\beta_0}\ln(\delta_0)v, x-x_0 \rangle}\\ 
		&\leq  \abs{\delta_{\mk{1}}^{\beta_{\mk{1}}}-\delta_{\mk{1}}^{\beta_0}- \delta_{\mk{1}}^{\beta_0}\ln(\delta_{\mk{1}})(\beta_{\mk{1}} - \beta_0)} \\
		&\qquad+ \abs{\delta_{\mk{1}}^{\beta_0}\ln(\delta_{\mk{1}})(\beta_{\mk{1}} - \beta_0) - \langle \delta_{\mk{1}}^{\beta_0}\ln(\delta_{\mk{1}})v, x-x_0 \rangle} \\
		&\qquad+ \abs{ \langle \delta_{\mk{1}}^{\beta_0}\ln(\delta_{\mk{1}})v, x-x_0 \rangle-\langle \delta_0^{\beta_0}\ln(\delta_0)v, x-x_0 \rangle}\\ 
		&\leq  \abs{\delta_{\mk{1}}^{\beta_{\mk{1}}}-\delta_{\mk{1}}^{\beta_0}- \delta_{\mk{1}}^{\beta_0}\ln(\delta_{\mk{1}})(\beta_{\mk{1}} - \beta_0)} \\
		&\qquad+ \abs{\beta_{\mk{1}} - \beta_0 - \langle v, x-x_0 \rangle}\delta_{\mk{1}}^{\beta_0}\abs{\ln(\delta_{\mk{1}})} \\
		&\qquad+ \abs{ \delta_{\mk{1}}^{\beta_0}\ln(\delta_{\mk{1}}) -\delta_0^{\beta_0}\ln(\delta_0)}\abs{v}\abs{x-x_0}.
	\end{aligned}
	}
	\mk{Recall that $\beta_1, \beta_0 \in [\beta_{min}, \beta_{max}]$. By Taylor expansion, there exist $\beta_2 \in (\beta_{min},\beta_{max})$ lying between $\beta_0$ and $\beta_1$, and $c(\beta_{min})$,} such that
	\formula{
		\abs{\mk{\delta_1^{\beta_1}-\delta_1^{\beta_0}-\delta_1^{\beta_0}\ln(\delta_1)(\beta_1-\beta_0)}} = \frac{1}{2}\abs{\mk{\delta_1^{\beta_2}\ln^2(\delta_1) (\beta_1 - \beta_0)^2}} \leq c\abs{\mk{\beta_1-\beta_0}}^2.
	}
	\mk{Since} $\beta$ is a Lipschitz continuous function on $\overline{D^*_{\mk{1}}}$, there exists a constant $c(X,D)$ such that
	\formula[eq:j1]{
		\abs{\delta_{\mk{1}}^{\beta_{\mk{1}}}-\delta_{\mk{1}}^{\beta_0}- \delta_{\mk{1}}^{\beta_0}\ln(\delta_{\mk{1}})(\beta_{\mk{1}} - \beta_0)} \leq c\abs{x-x_0}^2.
	}
	By Corollary \ref{cor:beta_holder}\mk{, $\beta$ is in $C^{\alpha + \epsilon}(\overline{D_{\mk{1}}^*})$ for some $\epsilon > 0$. Thus, by Lemma \ref{lem:lagrange}}, there exists a constant $c(X,D)$ such that for some $\epsilon > 0$ we have
	\formula[eq:j2]{
		\abs{\beta_{\mk{1}} - \beta_0 - \langle v, x-x_0 \rangle}\delta_{\mk{1}}^{\beta_0}\abs{\ln(\delta_{\mk{1}})} \leq c\abs{x-x_0}^{\alpha + \epsilon}.
	}
	By the fact that $\beta_0 > \alpha - 1$\mk{,} for some $\mk{\epsilon} > 0$ we have $\delta_{\mathbb{H}}(\cdot)^{\mk{\beta_0}}\ln(\delta_{\mathbb{H}}(\cdot)) \in C^{\alpha -1 + \mk{\epsilon}}(\overline{D^*_{\mk{1}}})$. Since $\abs{\nabla\beta}$ is bounded, there exists a constant $c(X,D)$ such that
	\formula[eq:j3]{
		\abs{ \delta_{\mk{1}}^{\beta_0}\ln(\delta_{\mk{1}}) -\delta_0^{\beta_0}\ln(\delta_0)}\abs{v}\abs{x-x_0} \leq c\abs{x-x_0}^{\alpha+\epsilon}
	}
	\mk{for some $\epsilon > 0$.} By combining \eqref{eq:f1alpha>1} with \eqref{eq:j1},\eqref{eq:j2},\eqref{eq:j3} we get the desired result for $\alpha \geq 1$.
	\end{proof}

\begin{lemma}
\label{lem:f_2}
There exists a constant $c(X,D)$ such that
\formula{
&\abs{f_{2}(x)} \leq c \, \max(\delta_D(x),\delta_{\mathbb{H}}(x))^{\beta_{min} - \mk{\eta}}\abs{\tilde{x}}^{{2\mk{\eta}}}\mathbbm{1}_{D_1 \cap \mathbb{H}}(x)
}
for some $c(D,X)>0$.
\end{lemma}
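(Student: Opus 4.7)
The plan is to reduce the bound on $f_2$ to a direct application of the first elementary inequality \eqref{eq:oszacowanie1}, combined with the quadratic approximation of $\partial D$ from Lemma~\ref{lem:odl^2}. Since $x \in \mathbb{H}$, we have $\delta_\mathbb{H}(x) = x_d$, so $f_2(x) = (\delta_D(x)^{\beta(x)} - x_d^{\beta(x)}) \mathbf{1}_{D_1 \cap \mathbb{H}}(x)$, which fits the template $|a^q - b^q|$ with a common exponent.

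First I would verify the hypotheses of \eqref{eq:oszacowanie1}. Lemma~\ref{lem:beta_ogr} gives $\beta(x) \in [\beta_{\min}, \beta_{\max}] \subset (0,1)$, so the exponent is admissible. The parameter $\eta$ from Remark~\ref{rem:k} lies in $(\beta_{\max}, \min\{\alpha, 1\}] \subset (0,1]$, so it is a valid H\"older exponent as well. Applying \eqref{eq:oszacowanie1} with $q = \beta(x)$ then yields
\formula{
|f_2(x)| \leq c \max(\delta_D(x), x_d)^{\beta(x) - \eta} |\delta_D(x) - x_d|^{\eta} \mathbf{1}_{D_1 \cap \mathbb{H}}(x),
}
for some $c = c(\eta)$, hence for some $c = c(X, D)$ since $\eta$ was fixed in terms of $X$.

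Next, Lemma~\ref{lem:odl^2} applies directly because $x \in D_1 \subset \overline{D}_1$, giving $|\delta_D(x) - x_d| \leq \frac{1}{2} |\tilde{x}|^2$. Raising to the $\eta$-th power turns this into $|\delta_D(x) - x_d|^{\eta} \leq 2^{-\eta} |\tilde{x}|^{2\eta}$, producing the factor $|\tilde{x}|^{2\eta}$ that appears in the claimed bound.

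Finally I would replace the exponent $\beta(x)$ by $\beta_{\min}$ in the maximum factor. By the choice of $\eta > \beta_{\max} \geq \beta(x)$, the exponent $\beta(x) - \eta$ is negative, and $\beta(x) - \eta \geq \beta_{\min} - \eta$. Since $0 \in \partial D$ (by Remark~\ref{rem:set_coordinate_system}), for $x \in D_1$ we have $\delta_D(x) \leq |x| \leq 1$ and $x_d \leq |x| \leq 1$, so $\max(\delta_D(x), x_d) \leq 1$; a negative exponent larger than $\beta_{\min} - \eta$ then gives a value no greater than the one with exponent $\beta_{\min} - \eta$. Combining these three observations produces the asserted inequality. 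The only mildly delicate point is this final monotonicity step, which relies on the base being at most $1$ and is the reason why the normalisation $|x| \leq 1$ and the coordinate choice of Remark~\ref{rem:set_coordinate_system} are used; the rest of the argument is routine.
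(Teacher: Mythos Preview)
Your proposal is correct and follows essentially the same route as the paper: apply inequality~\eqref{eq:oszacowanie1} with $q=\beta(x)$ and exponent $\eta$, bound $|\delta_D(x)-\delta_{\mathbb{H}}(x)|$ by $\tfrac{1}{2}|\tilde x|^2$ via Lemma~\ref{lem:odl^2}, and then replace $\beta(x)$ by $\beta_{\min}$ using that $\max(\delta_D(x),\delta_{\mathbb{H}}(x))\le 1$ on $D_1\cap\mathbb{H}$. One small remark: the constant in~\eqref{eq:oszacowanie1} is stated as $c(q,\eta)$, not $c(\eta)$; since $q=\beta(x)$ ranges over the compact interval $[\beta_{\min},\beta_{\max}]\subset(0,1)$ the constant is uniform in $x$, but you may want to say this explicitly.
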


\begin{proof} 
Let $x \in D_1 \cap \mathbb{H}$. By Lemma~\ref{lem:odl^2}, \eqref{eq:oszacowanie1} and \eqref{eq:split} there exists a constant $c(X,D)$ such that
\formula{
\abs{f_{2}(x)}  \leq c \, \max(\delta_D(x),\delta_{\mathbb{H}}(x))^{\beta(x) - {\mk{\eta}}}\abs{\delta_D(x) - \delta_{\mathbb{H}}(x)}^{{\mk{\eta}}}\leq c \, \max(\delta_D(x),\delta_{\mathbb{H}}(x))^{\beta(x) - {\mk{\eta}}}\abs{\tilde{x}}^{{2\mk{\eta}}}.
}
Since $\max(\delta_D(x),\delta_{\mathbb{H}}(x)) \leq 1$ for $x \in D_{1} \cap \mathbb{H}$, we have 
\formula{
\max(\delta_D(x),\delta_{\mathbb{H}}(x))^{\beta(x) - {\mk{\eta}}} \leq \max(\delta_D(x),\delta_{\mathbb{H}}(x))^{\beta_{min} - {\mk{\eta}}}.
}
Thus we have
\formula{
\abs{f_{2}(x)} \leq c \, \max(\delta_D(x),\delta_{\mathbb{H}}(x))^{\beta_{min} - \mk{\eta}}\abs{\tilde{x}}^{{2\mk{\eta}}}.
}
\end{proof}

\begin{remark}
\label{rem:nabla_f_2}	
	By Remark~\ref{rem:k} and the fact that 
\formula{
	\beta_{max} \geq \max(\beta(u),\beta(-u)) \geq \frac{1}{2}\beta(u) + \frac{1}{2}\beta(-u) = \frac{\alpha}{2} \mk{,}
}
we have $2\mk{\eta} > 2\beta_{max} \geq \alpha$, thus in the case $\alpha \geq 1$, $\nabla f_2(x_0)$ exists and \mk{it} is equal to $0$.
\end{remark}

\begin{remark}
	\label{lem:f_3}
	We have 
	\formula{
		\abs{f_3(x)} \leq g(x)\mathbbm{1}_{\mk{\overline{D_1^*}} \setminus (D_1 \cap \mathbb{H})}+h(x)\mathbbm{1}_{\mathbb{H} \setminus D_1}(x) \mk{.}
	}
\end{remark}

\begin{lemma}
\label{lem:gen_est}
There exists $c_{gen}(X,D) > 0$ such that for $x \in D^*_{1/2}$.
\formula{
\abs{\mathcal{A}g(x)} \leq c_{gen}.
}
\end{lemma}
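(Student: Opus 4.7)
The plan is to use the decomposition $g = h + f_1 + f_2 + f_3$ coming from \eqref{eq:split2} together with the fact (Corollary~\ref{cor:dynkin_pionwise}) that $\mathcal{A} h$ vanishes at every point of $\mathbb{H}$. For an arbitrary $x \in D^*_{1/2}$, I would first apply the coordinate change of Remark~\ref{rem:set_coordinate_system}, choosing the isometry so that $z(x) = 0$, $n(z(x)) = e_d$, and $x$ itself plays the role of $x_0 = x_{0d} e_d$, with $x_{0d} = \delta_D(x_0) \leq 1/2$. Since the pointwise generator is intrinsic under isometries (and $X$ is scale-invariant), this reduces the statement to a uniform bound $|\mathcal{A} g(x_0)| \leq c_{gen}(X, D)$ for every such $x_0$.

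Next, since $x_0 \in \mathbb{H}$, Corollary~\ref{cor:dynkin_pionwise} gives $\mathcal{A} h(x_0) = 0$. Using linearity (and checking absolute convergence of each resulting integral) one obtains
\formula{
\mathcal{A} g(x_0) = \mathcal{A} f_1(x_0) + \mathcal{A} f_2(x_0) + \mathcal{A} f_3(x_0).
}
Each term is then estimated using the lemmas above. For $\mathcal{A} f_1$: Lemma~\ref{lem:f_1} forces $f_1(x_0) = 0$ and an $(\alpha+\epsilon)$-order bound in both cases $\alpha < 1$ and $\alpha \geq 1$, which against $\nu(y-x_0) \leq c\abs{y-x_0}^{-d-\alpha}$ yields an integrable integrand of order $\abs{y-x_0}^{\epsilon-d}$ near $x_0$; for the tail, $f_1$ is supported in the bounded set $D_1 \cap \mathbb{H}$, and when $\alpha \geq 1$ the first-order correction $\langle \nabla f_1(x_0), y-x_0 \rangle$ is controlled by the uniform bound $\abs{\nabla f_1(x_0)} \leq c \delta_\mathbb{H}(x_0)^{\beta(x_0)-\epsilon} \leq c$ recorded in \eqref{rem:nabla_f1}. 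For $\mathcal{A} f_2$: Lemma~\ref{lem:f_2} supplies the needed estimate, while Remark~\ref{rem:nabla_f_2} removes the first-order correction in the case $\alpha \geq 1$. For $\mathcal{A} f_3$: Remark~\ref{lem:f_3} reduces the problem to integrating $g$ on $\overline{D_1^*} \setminus (D_1 \cap \mathbb{H})$ and $h$ on $\mathbb{H} \setminus D_1$; on the first region the quadratic bound $\delta_D(y) \leq \tfrac{1}{2}\abs{\tilde y}^2$ from Lemma~\ref{lem:odl^2} gives $g(y) \leq c \abs{\tilde y}^{2\beta_{min}}$, while on the second region $h \leq 1$ and $\abs{y-x_0}$ stays bounded below.

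I expect the main obstacle to be the estimate of $\mathcal{A} f_2(x_0)$. The bound $\abs{f_2(y)} \leq c \max(\delta_D(y), \delta_\mathbb{H}(y))^{\beta_{min} - \eta}\abs{\tilde y}^{2\eta}$ carries a factor which blows up as $y$ approaches $\partial D$ or $\partial \mathbb{H}$ (since $\beta_{min} < \eta$), so it must be matched carefully against the singularity of $\nu$. The natural approach is to split the integration domain by whether $\abs{y-x_0} \leq \tfrac{1}{2} x_{0d}$ (so the near region sits comfortably inside $D \cap \mathbb{H}$ and one exploits the $\abs{\tilde y}^{2\eta}$ factor directly), and otherwise to split further by the relation between $\delta_D(y)$ and $\delta_\mathbb{H}(y)$ and by whether $y \in D \cap \mathbb{H}$ at all. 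The four sign conditions collected in Remark~\ref{rem:k}, namely $2\eta - \alpha - 2 < -1$, $\beta_{min} + \eta - \alpha > 0$, $\beta_{min} - \eta > -1$, and $2\eta - \alpha - 1 > -1$, correspond precisely to the convergence of the integrals over these sub-regions (after integration first in the tangential variable $\tilde y$ and then in the height coordinate); this is exactly the reason for fixing $\eta$ in the open interval $(\beta_{max}, \min\{\alpha, 1\})$.
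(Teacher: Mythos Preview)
Your plan coincides with the paper's: reduce to the reference point $x_0$ via Remark~\ref{rem:set_coordinate_system}, use Corollary~\ref{cor:dynkin_pionwise} to kill $\mathcal{A}h(x_0)$, and bound $\mathcal{A}f_1(x_0)$, $\mathcal{A}f_2(x_0)$, $\mathcal{A}f_3(x_0)$ separately via Lemmas~\ref{lem:f_1}, \ref{lem:f_2} and Remark~\ref{lem:f_3}. Your treatment of $\mathcal{A}f_2$ (integrate first in $\tilde y$, then in the height variable, invoking the four sign conditions of Remark~\ref{rem:k}) and of $\mathcal{A}f_3$ (quadratic bound near the origin, trivial bound far away) is essentially the paper's computation; the paper does not use your preliminary near/far split for $f_2$, but that is cosmetic.

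There is one genuine gap in your sketch of $\mathcal{A}f_1$ for $\alpha\geq 1$. The $(\alpha+\epsilon)$-estimate of Lemma~\ref{lem:f_1} is stated only for $y\in D_1\cap\mathbb{H}$; outside that set $f_1(y)=0$ and the integrand becomes $\abs{\langle\nabla f_1(x_0),\,y-x_0\rangle}\,\nu(y-x_0)$. The difficulty is not in the tail at infinity (where your argument is fine) but for $y$ \emph{close} to $x_0$ with $y\notin D_1\cap\mathbb{H}$: such points occur as soon as $\abs{y-x_0}\geq\delta_D(x_0)$, and
\formula{
\int_{\abs{y-x_0}\geq\delta_D(x_0)}\abs{y-x_0}\,\nu(y-x_0)\,dy \;\asymp\; \delta_D(x_0)^{1-\alpha}
}
(logarithmic when $\alpha=1$), which blows up as $x_0\to\partial D$. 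A \emph{uniform} bound $\abs{\nabla f_1(x_0)}\leq c$ therefore does not suffice; you must use the actual decay in~\eqref{rem:nabla_f1} with $\epsilon$ chosen small enough that $\beta(x_0)-\epsilon+1-\alpha\geq\beta_{min}-\epsilon+1-\alpha>0$ (possible by Lemma~\ref{lem:beta_ogr}). The paper does exactly this, taking $\epsilon=\beta_{min}+1-\alpha$ when $\alpha>1$ and $\epsilon=\beta_{min}/2$ when $\alpha=1$. With this correction your argument is complete and matches the paper's.
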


\begin{proof}
It is enough to show that $\abs{\mathcal{A}g(x_0)} \leq c_{gen}$ for some constant $c_{gen}\mk{(X, D)}$. \mk{The result in the general case follows then by an appropriate change of coordinates; see Remark~\ref{rem:set_coordinate_system}.} To \mk{estimate} $\mathcal{A}g(x_0)$\mk{,} we will compare the functions $g$ and $h$. Recall that, by Lemma~\ref{tw:harmonicznosc_w_R^d}, $h$ is harmonic on $\mathbb{H}$.

\smallskip

By \mk{\eqref{eq:split2}} we have
\formula{
\abs{\mathcal{A}(g - h)(x_0)} \leq \abs{\mathcal{A}f_1(x_0)} +\abs{\mathcal{A}f_2(x_0)} +\abs{\mathcal{A}f_3(x_0)}.
}
We claim that each summand is bounded by some $c(X,D) > 0$. Then $\mathcal{A}(g - h)(x_0)$ is well defined and $\abs{\mathcal{A}(g - h)(x_0)} \leq c\mk{(X, D)}$. By Lemma~\ref{tw:harmonicznosc_w_R^d}, $\mathcal{A}h(x_0)=0$, thus $\mathcal{A}g(x_0)$ is well defined and $\abs{\mathcal{A}g(x_0)} \leq c\mk{(X, D)}$\mk{, as desired}. 

To estimate $\abs{\mathcal{A}f_1(x_0)}$ we use Lemma~\ref{lem:f_1} and the fact that $f_1(x_0) = 0$. In \mk{the} case $\alpha < 1$ there exists $\epsilon(X,D) > 0$ and constants $c(X,D)$ such that
\formula{
\abs{\mathcal{A}f_1(x_0)} \leq \int_{\mathbb{R}^d}\abs{f_1(x)- f_1(x_0)} \nu(x - x_0) dx \leq c\int_{B(x_0,2)}\abs{x - x_0}^{\alpha + \epsilon} \nu(x - x_0) dx \leq c.
}
In \mk{the} case $\alpha > 1$\mk{, we have}
\formula{
\abs{\mathcal{A}f_1(x_0)} &\leq \int_{\mathbb{R}^d} \abs{f_1(x) - f_1(x_0) - \langle \nabla f_1(x_0), x-x_0  \rangle} \nu(x - x_0) dx \\
& \leq \int_{B(x_0,1/2)}\abs{x - x_0}^{\alpha + \epsilon} \nu(x - x_0) dx + \abs{\nabla f_1(x_0)}\int_{B(x_0,\delta_{D}(x_0))^c}\abs{ x-x_0 }\nu(x - x_0) dx .
}
\mk{Using additionally \eqref{rem:nabla_f1} with $\epsilon = \beta_{min} + 1 - \alpha$ (recall that $\beta_{min} > \alpha - 1$), we find that} there exist constants $c(X,D)$ such that
\formula{
\abs{\mathcal{A}f_1(x_0)} &\leq c+c\delta_{\mathbb{H}}(x_0)^{\beta(x_0)-\mk{\beta_{min} - 1 + \alpha}}\int_{B(x_0,\delta_{D}(x_0))^c}\abs{ x-x_0 }^{1-d-\alpha} dx \\
 & \leq c + c\delta_{D}(x_0)^{\beta(x_0)-\mk{\beta_{min}}} \leq \mk{c},
}
\mk{where in the last step we used the fact that $\beta(x_0) \geq \beta_{min}$}.

Finally, in the case $\alpha = 1$, we have
\formula{
\abs{\mathcal{A}f_1(x_0)}&\leq \langle \nabla f_1(x_0), b   \rangle+ \int_{\mathbb{R}^d} \abs{f_1(x) - f_1(x_0) - \langle \nabla f_1(x_0), x-x_0  \rangle \mathbbm{1}_{B(x_0,r)}(x)} \nu(x - x_0)dx,
}
where $b$ is the drift of the process $X$ and $r$ is an arbitrarily chosen radius defined in \eqref{eq:generator}. We set $r=2$. \mk{As in the case $\alpha > 1$, we find that} there exist constants $c(X,D)$ such that
\formula{
\abs{\mathcal{A}f_1(x_0)}&\leq c +  \int_{\mathbb{R}^d} \abs{f_1(x) - f_1(x_0) - \langle \nabla f_1(x_0), x-x_0  \rangle\mathbbm{1}_{B(x_0,2)}(x)} \nu(x - x_0) dx \\
&\leq c + c\int_{D_1 \cap \mathbb{H}}\abs{x - x_0}^{\alpha + \epsilon} \nu(x - x_0) dx \\ 
&\qquad+ \int_{(D_1 \cap \mathbb{H})^c \cap B(x_0,2)}\abs{\langle \nabla f_1(x_0), x-x_0  \rangle}\nu(x - x_0) dx \\ 
&\leq c + \abs{\nabla f_1(x_0)}\int_{B(x_0,\delta_{D}(x_0))^c \cap B(x_0,2)}\abs{ x-x_0 }\nu(x - x_0) dx
}
\mk{Again using \eqref{rem:nabla_f1} with $\epsilon = \beta_{min} / 2$, we obtain}
\formula{
\abs{\mathcal{A}f_1(x_0)}& \leq c + c\delta_{\mathbb{H}}(x_0)^{\beta(x_0)-\mk{\beta_{min}/2}}(\ln(2) - \ln(\delta_{\mathbb{H}}(x_0))\leq c \mk{,}
}
\mk{because $\beta(x_0) - \beta_{min}/2 \geq \beta_{min}/2$.}

\smallskip
By \eqref{eq:split} we have $f_2(x_0) = f_3(x_0) = 0$ and $\nabla f_3(x_0) = 0$. By Remark \ref{rem:nabla_f_2} for $\alpha \geq 1$ we have $\nabla f_2(x_0) = 0$, thus for every $\alpha \in (0,2)$ we have
\formula{
\mathcal{A}f_2(x_0) =  \int_{\mathbb{R}^d} f_2(x) \nu (x-x_0)dx \\
\mathcal{A}f_3(x_0) =  \int_{\mathbb{R}^d} f_3(x) \nu (x-x_0)dx.
}

\smallskip
To estimate $\abs{\mathcal{A}f_2(x_0)}$ we use Lemma~\ref{lem:odl^2}, Lemma~\ref{lem:f_2}, Lemma~\ref{lem:beta_ogr} and Remark~\ref{rem:k}. \mk{For appropriate constants $c(X,D)$,} we have
\formula{
&\abs{\mathcal{A}f_2(x_0)} \leq \int_{\mathbb{R}^d} \abs{f_2(x)}\nu(x-x_0)dx \\ & \leq c\int_{D \cap \mathbb{H} \cap B(x_0,2)}  \max(\delta_D(x),\delta_{\mathbb{H}}(x))^{\beta_{min} - \mk{\eta}}\abs{\tilde{x}}^{2\mk{\eta}} \nu(x - x_0) dx  \\
&\leq  c\int_{\mathbb{H} \cap B(x_0,2)} \delta_{\mathbb{H}}(x)^{\beta_{min} - \mk{\eta}}\abs{\tilde{x}}^{2\mk{\eta}} \abs{x - x_0}^{-d-\alpha} dx \\
&\leq  c\int_0^{2}  \int_{B^{(d-1)}(0,2)} t^{\beta_{min} - \mk{\eta}} \abs{\tilde{x}}^{2\mk{\eta}} (\abs{\tilde{x}}^2 + \abs{t-x_{0d}}^2)^{-\frac{d+\alpha}{2}} d\tilde{x}dt \\
&= c\int_0^{2} t^{\beta_{min} - \mk{\eta}} \int_0^{2} r^{d-2 + 2\mk{\eta}} (r^2 + \abs{t-x_{0d}}^2)^{-\frac{d+\alpha}{2}} dr dt.\\
}
Now we investigate the integral over $r$. For $b > 0$ we have
\formula{
\int_0^{2} r^{d-2 + 2\mk{\eta}} (r^2 + b^2)^{-\frac{d+\alpha}{2}} dr &= b^{2\mk{\eta} - \alpha -1} \int_0^{\frac{2}{b}} s^{d-2 + 2\mk{\eta}} (1 + s^2)^{-\frac{d+\alpha}{2}} ds \\
&\leq b^{2\mk{\eta} - \alpha-1} \int_0^{\frac{2}{b}}(1 + s^2)^{\frac{2\mk{\eta} - \alpha-2}{2}} ds \leq c b^{2\mk{\eta} - \alpha -1},
}
because, by Remark \ref{rem:k}, $2\mk{\eta} - \alpha -2 < -1$. Next we take $b = \abs{t-x_{0d}}$ and we have
\formula{
&c\int_0^{2} t^{\beta_{min} - \mk{\eta}}  \int_0^{2} r^{d-2 + 2\mk{\eta}} (r^2 + \abs{t-x_{0d}}^2)^{-\frac{d+\alpha}{2}} dr dt\\
&\leq  c \int_0^{2} t^{\beta_{min} - \mk{\eta}} \abs{t-x_{0d}}^{2\mk{\eta} - \alpha-1} dt\\
&= c x_{0d}^{\beta_{min}+\mk{\eta}-\alpha} \int_0^{\frac{2}{x_{0d}}} u^{\beta_{min} - \mk{\eta}} \abs{u-1}^{2\mk{\eta} - \alpha-1} du \\
&=  c x_{0d}^{\beta_{min}+ \mk{\eta} -\alpha} \expr{\int_0^2 u^{\beta_{min} - \mk{\eta}} \abs{u-1}^{2\mk{\eta} - \alpha-1} du + c\int_2^{\frac{2}{x_{0d}}} u^{\beta_{min} +\mk{\eta} -\alpha - 1}du} \leq \mk{c} ,
}
because, by Remark \ref{rem:k}, $\beta_{min}+ \mk{\eta} -\alpha > 0$, $\beta_{min} - \mk{\eta} > -1$ and $2\mk{\eta} - \alpha-1 > -1$.

\smallskip
To estimate $\abs{\mathcal{A}f_3(x_0)}$ we \mk{denote $A:= \{x \in \mathbb{R}^d: \abs{\tilde{x}} \leq 1, \abs{x_d} < \tfrac{1}{2} \abs{\tilde{x}}^2 \}$. By Lemma~\ref{lem:odl^2},} $(D \setminus \mathbb{H} \cup \mathbb{H} \setminus D) \cap B(x_0,1) \subset A$. We write
\formula{
\abs{\mathcal{A}f_3(x_0)} &\leq \int_{\mathbb{R}^d} \abs{f_3(x)}\nu(x-x_0)dx \\
&\leq c\int_A  (g(x) + h(x)) \nu(x-x_0)dx + \int_{B(x_0,1)^c}  (g(x) + h(x)) \nu(x - x_0) dx \\
&:= J_1 + J_2.
} 
For $x \in A$ we have $\frac{\mk{\abs{x_d}}}{\abs{\tilde{x}}} \leq \mk{\tfrac{1}{2}} \abs{\tilde{x}} < 1$\mk{,} and \mk{hence}
\formula{
\frac{\abs{x-x_0}}{\abs{x}} = \frac{\sqrt{\abs{\tilde{x}}^2+\abs{x_{0d}-x_{d}}^2}}{\sqrt{\abs{\tilde{x}}^2 + x_d^2}}=
\frac{\sqrt{1+\frac{\abs{x_{0d}-x_{d}}^2}{\abs{\tilde{x}}^2}}}{\sqrt{1+\frac{x_d^2}{\abs{\tilde{x}}^2}}} \geq \frac{1}{\sqrt{2}},
}
thus, by Assumption \ref{as:X} we have 
\formula{
	\nu(x-x_0) \leq c \nu(x).
} 
Moreover for $x \in A$ we have $\delta_{\mathbb{H}}(x),\delta_D(x) \leq c\abs{\tilde{x}}^2$ and \mk{hence}
\formula{
g(x) + h(x) \leq c\abs{\tilde{x}}^{2\beta_{min}} .
} 
Thus
\formula{
J_1 &\leq c\int_{B^{d-1}(0,1)} \abs{\tilde{x}^{2\beta_{min}}} \int_{-\abs{\tilde{x}}^2}^{\abs{\tilde{x}}^2}\frac{1}{(\abs{x_d}^2 + \abs{\tilde{x}}^2)^{\frac{d+\alpha}{2}}}d{x_d}d\tilde{x} \\
&\leq c\int_{B^{d-1}(0,1)} \abs{\tilde{x}}^{-d-\alpha +2\beta_{min} +2} d\tilde{x} \leq c, 
}
because $\beta_{min} > \alpha -1$. By Remark \ref{rem:g_Hol} we have
\formula{
\begin{aligned}
J_2 &\leq  \int_{B(x_0,1)^c} \nu(x - x_0) dx+ \int_{\mk{B(x_0,1)^c \cap \mathbb{H}}} x_d^{\beta (x_0)} \nu(x - x_0) dx \\
& \leq  c  + \int_{\mk{B(x_0,1)^c \cap \mathbb{H}}} x_d^{\beta (x_0)} \nu(x - x_0) dx.
\end{aligned} 
} 
Since for $x$ such that $\abs{x - x_0} \geq 1 $ we have $ x_d \leq \abs{x_0} + \abs{x - x_0} \leq 1/2 + \abs{x - x_0} \leq  2\abs{x - x_0}$, we have
\formula{
J_2 \leq  c  + c\int_{B(x_0,1)^c} \abs{x - x_0}^{\beta_{max} } \nu(x - x_0) dx  = c.
}

\mk{We have thus proved that all three summands: $\abs{\mathcal{A}f_1(x_0)}$, $\abs{\mathcal{A}f_2(x_0)}$ and $\abs{\mathcal{A}f_3(x_0)}$, are bounded by a constant $c(X,D)$. This completes the proof.}
\end{proof}

We keep the notation $c_{gen}$ till the end of this article.

\mk{We recall the following fundamental result on existence of boundary limits of ratios of harmonic functions.}

\begin{theorem}[{Theorem~2 and Example 1 in \cite{bib:jk16}}]
\label{thm:limit}
Let $D$ be open set, $z \in \partial D$. Suppose that non-negative functions \mk{$f_1$ and $f_2$} are regular harmonic functions in $D \cap B(z,r)$ and are equal to zero in $B(z, r) \setminus D$ for $r < R$. Then either one of \mk{$f_1$ and $f_2$} is zero everywhere in $D \cap B(z, r)$, or the finite, positive boundary limit of \mk{$f_1(x) / f_2(x)$} exists as $x \to z$, $x \in D$. 
\end{theorem}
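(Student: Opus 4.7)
The plan is to establish the existence of the boundary limit $\lim_{x \to z,\, x \in D} f_1(x)/f_2(x)$ via the classical \emph{reduction of oscillation} method, built on the boundary Harnack inequality (BHI). A scale-invariant BHI is available for the class of non-symmetric $\alpha$-stable L\'evy processes covered by Assumption~\ref{as:X}; see, e.g., \cite{bib:bkk15}. As a first step, I would apply BHI to $(f_1, f_2)$ on $D \cap B(z, s)$ for each $s < r$ to obtain $\sup_{D \cap B(z, s/2)} f_1/f_2 \leq c_{BHI}\, \inf_{D \cap B(z, s/2)} f_1/f_2$, with $c_{BHI}$ independent of $s$. Provided that neither $f_i$ vanishes identically on $D \cap B(z, r)$ (otherwise the first alternative in the dichotomy holds), this yields $0 < m_s := \inf_{D \cap B(z, s)} f_1/f_2 \leq M_s := \sup_{D \cap B(z, s)} f_1/f_2 < \infty$ for every $s < r$.

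The core of the argument is the reduction of oscillation. For each $s < r$, the functions $f_1 - m_s f_2$ and $M_s f_2 - f_1$ are non-negative, regular harmonic in $D \cap B(z, s)$, and vanish on $B(z, s) \setminus D$. Applying BHI to the pairs $(f_1 - m_s f_2, f_2)$ and $(M_s f_2 - f_1, f_2)$ on $D \cap B(z, s/2)$ gives
\[ M_{s/2} - m_s \leq c_{BHI}(m_{s/2} - m_s), \qquad M_s - m_{s/2} \leq c_{BHI}(M_s - M_{s/2}). \]
Adding these inequalities and rearranging yields $M_{s/2} - m_{s/2} \leq \theta (M_s - m_s)$, where $\theta := (c_{BHI}-1)/(c_{BHI}+1) \in (0,1)$. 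Iterating with $s_n := r \cdot 2^{-n}$ produces $M_{s_n} - m_{s_n} \leq \theta^n(M_r - m_r) \to 0$, so the monotone bounded sequences $(M_{s_n})$ and $(m_{s_n})$ converge to a common limit $L \in [m_r, M_r] \subset (0, \infty)$, which is $\lim_{x \to z} f_1(x)/f_2(x)$.

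The main obstacle I anticipate is securing the scale invariance (or at least uniformity in $s$) of $c_{BHI}$; without it, the geometric decay $\theta^n$ collapses. For the strictly $\alpha$-stable processes under Assumption~\ref{as:X}, scale invariance of the process itself transfers directly to $c_{BHI}$ via rescaling $B(z, s) \to B(z, 1)$. In the broader non-symmetric L\'evy setting of \cite{bib:jk16}, where arbitrary open sets $D$ are permitted and no exact self-similarity is available, proving a suitable \emph{uniform} BHI is the deep part of the argument: it demands delicate two-sided Green-function estimates and careful control of L\'evy measure tails in the absence of symmetry, which is why the result is invoked as a non-trivial input rather than reproduced here.
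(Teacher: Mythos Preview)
The paper does not prove Theorem~\ref{thm:limit} at all; it is quoted verbatim from \cite{bib:jk16} (Theorem~2 and Example~1 there) and used as a black box in the proof of the main result. So there is no in-paper argument to compare your proposal against.

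That said, your sketch has a genuine gap that is specific to the non-local setting. When you form $f_1 - m_s f_2$, you correctly observe that it is non-negative on $D\cap B(z,s)$ and vanishes on $B(z,s)\setminus D$. But the boundary Harnack inequality for jump processes, as in \cite{bib:bkk15} or \cite{bib:rs16}, applies only to functions that are non-negative on \emph{all of} $\mathbb{R}^d$: a regular harmonic function in $D\cap B(z,s')$ is the expectation of its values at the first exit, and the process can jump directly to points outside $B(z,s)$, where $f_1 - m_s f_2$ may well be negative. Thus you cannot apply BHI to the pair $(f_1 - m_s f_2,\, f_2)$ as written, and the displayed inequalities $M_{s/2}-m_s \le c_{BHI}(m_{s/2}-m_s)$ etc.\ are not justified. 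This is exactly the obstruction that makes the passage from BHI to boundary limits non-trivial for non-local operators, and it is not resolved by scale invariance of $c_{BHI}$.

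The actual proof in \cite{bib:jk16} (building on ideas from \cite{bib:bkk08} and \cite{bib:ksv16}) handles this by a more delicate iteration: rather than subtracting a multiple of $f_2$ globally, one decomposes $f_i$ according to where the process first exits a sequence of shrinking balls, and compares the resulting pieces using BHI together with explicit control of the L\'evy tails. Your final paragraph hints that something deep is going on, but misidentifies the difficulty as lying in the uniformity of $c_{BHI}$; the real issue is the global non-negativity hypothesis in the non-local BHI, which blocks the classical reduction-of-oscillation step you wrote down.
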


\mk{Following \cite{bib:jk16}, we introduce the following notation}.

\begin{definition}
\mk{The} \emph{relative oscillation} of a function $f$ on \mk{the set} $D_r$ is given by \mk{the} formula
\formula{
RO_r \expr{f} = \frac{\sup_{x \in D_r}f(x)}{\inf_{x \in D_r}f(x)}.
}
\end{definition}

\mk{Note that if $f_1$ and $f_2$ are positive in $D_r$ for some $r$, then the} existence of the limit of $\mk{f_1 / f_2}$ as $x \to 0$ is equivalent to the condition $RO_r\mk{(f_1 / f_2)} \to 1$ as $r \to 0^+$.

\begin{definition}
We define the \emph{harmonic reduction} $g_r$ of the function $g$ by the formula
\formula{
g_r(x) = \mathbb{E}^x (g(X_{\tau_{D_r}})).
}
\end{definition}

\begin{lemma}
\label{lem:main}
For every $\epsilon >0$ there exists a radius $\mk{r_0}$ such that
\formula[eq:comp_g_time]{
RO_{\mk{r}}\expr{\frac{g_{\mk{r}}}{g}} \leq \frac{1+\epsilon}{1-\epsilon}
}
\mk{for every $0 < r \leq r_0$.}
\end{lemma}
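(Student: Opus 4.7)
The plan is to reduce the oscillation bound to showing $g_r(x)/g(x) \to 1$ uniformly on $D_r$ as $r \to 0^+$, then use the generator estimate from Lemma~\ref{lem:gen_est} combined with a sharp expected exit-time bound.

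First I would write, for $x \in D_r$ with $r \le 1/2$, the Green function / Dynkin-type identity
\[
g_r(x) - g(x) \;=\; \int_{D_r} G_{D_r}(x,y)\, \mathcal{A}g(y)\, dy \;=\; \mathbb{E}^x \int_0^{\tau_{D_r}} \mathcal{A}g(X_s)\, ds,
\]
which is valid because $g \in C^{\alpha+\epsilon}(\overline{D_1^*})$ by Remark~\ref{rem:g_Hol}, so $\mathcal{A}g$ is pointwise well defined and continuous on $D_{1/2}^*$, and the paths of $X$ stay in $D_r \subset D_{1/2}^*$ until time $\tau_{D_r}$. Combined with the uniform estimate $|\mathcal{A}g| \leq c_{gen}$ on $D_{1/2}^*$ from Lemma~\ref{lem:gen_est}, this yields
\[
|g_r(x) - g(x)| \;\leq\; c_{gen}\, \mathbb{E}^x \tau_{D_r}.
\]

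Next I would match this against the lower bound $g(x) \geq \delta_D(x)^{\beta_{max}}$ (valid since $\delta_D(x) \leq 1$). The target is a boundary-adapted exit-time estimate of the form
\[
\mathbb{E}^x \tau_{D_r} \;\leq\; C\, g(x)\, r^{\alpha - \beta(x)} \qquad (x \in D_r),
\]
which I would derive by comparing $x \mapsto \mathbb{E}^x \tau_{D_r}$ with the harmonic barrier $h_{e_d,0}(x) = x_d^{\beta(0)}$ from Lemma~\ref{tw:harmonicznosc_w_R^d}: in the half-space the exit time from $\mathbb{H} \cap B(0,r)$ is easily related to $h_{e_d,0}$, and the perturbation to $D_r$ is controlled by the flattening estimate of Lemma~\ref{lem:odl^2}. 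With this bound in hand,
\[
\left| \frac{g_r(x)}{g(x)} - 1 \right| \;\leq\; C c_{gen}\, r^{\alpha - \beta(x)} \;\leq\; C c_{gen}\, r^{\alpha - \beta_{max}},
\]
which tends to $0$ uniformly in $x \in D_r$ as $r \to 0^+$, since $\alpha - \beta_{max} > 0$ by Lemma~\ref{lem:beta_ogr}. Choosing $r_0$ so that $C c_{gen}\, r_0^{\alpha-\beta_{max}} \leq \epsilon$ then forces $g_r(x)/g(x) \in [1-\epsilon, 1+\epsilon]$ throughout $D_r$, and hence $\RO_{r}(g_r/g) \leq (1+\epsilon)/(1-\epsilon)$ for every $r \leq r_0$.

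The main obstacle will be the sharp boundary exit-time estimate. The naive bound $\mathbb{E}^x \tau_{D_r} \leq c\, r^\alpha$ that follows from $D_r \subset B(x, 2r)$ and $\alpha$-stable scaling is inadequate near $\partial D$, where $g(x) \sim \delta_D(x)^{\beta(x)}$ can be much smaller than $r^\alpha$. Sharpening this estimate to recover the full $\delta_D(x)^{\beta(x)}$-decay is exactly where the $C^{1,1}$ (resp.\ $C^{2,\alpha-1+\epsilon}$) regularity of $D$, the Hölder regularity of $\vartheta$, and the directional boundary behavior encoded by $\beta$ interact in an essential way.
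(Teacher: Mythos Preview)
Your outline has the right skeleton --- get $|g_r(x)-g(x)|\le c_{gen}\,\mathbb{E}^x\tau_{D_r}$ from Lemma~\ref{lem:gen_est}, then absorb the right-hand side --- but there are two real gaps.

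First, the Dynkin-type identity you write down is not justified by ``$g\in C^{\alpha+\epsilon}$ so $\mathcal{A}g$ is continuous''. The function $g$ is not in the domain of the generator (it is not even $C^2$, and it is not in $C_0$), so Dynkin's formula does not apply directly. The paper handles this by mollifying: set $g_k=\phi_k*g\in C_c^\infty$, check via Fubini that $\mathcal{A}g_k=\phi_k*\mathcal{A}g$ on $D_r^k=\{y\in D_r:\delta_D(y)\ge 1/k\}$ so that $|\mathcal{A}g_k|\le c_{gen}$ there, apply Dynkin's formula to $g_k$ on $D_r^l$ for $k\ge l$, and pass to the limit $k\to\infty$, then $l\to\infty$. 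This is routine but has to be done.

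Second --- and this is the substantive point --- you do not need, and should not try to prove, the sharp boundary estimate $\mathbb{E}^x\tau_{D_r}\le C\,g(x)\,r^{\alpha-\beta(x)}$. You identify this yourself as the main obstacle, and indeed proving such a two-sided boundary-adapted exit-time bound for non-symmetric stable processes would be essentially as hard as the theorem you are trying to prove. The paper sidesteps this completely: instead of comparing $\mathbb{E}^x\tau_{D_r}$ to $g(x)$, it compares it to $g_r(x)$. By the Ikeda--Watanabe formula,
\[
g_r(x)=\int_{D_1^*\setminus D_r}\!\!\Bigl(\int_{D_r}G_{D_r}(x,z)\,\nu(z-y)\,dz\Bigr)g(y)\,dy
\;\ge\; c\,\mathbb{E}^x\tau_{D_r}\int_{D_1^*\setminus D_r}|y|^{-d-\alpha}g(y)\,dy,
\]
since $\nu(z-y)\ge c|y|^{-d-\alpha}$ for $z\in D_r$, $y\notin B(0,r)$. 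A direct computation on a cone $\{2|\tilde y|<y_d\}$ gives $\int_{D_1^*\setminus D_r}|y|^{-d-\alpha}g(y)\,dy\ge c(r^{\beta_{max}-\alpha}-1)$, hence $c_{gen}\,\mathbb{E}^x\tau_{D_r}\le \epsilon\,g_r(x)$ for $r$ small. Plugging this back into $|g_r(x)-g(x)|\le c_{gen}\,\mathbb{E}^x\tau_{D_r}$ gives $1-\epsilon\le g_r(x)/g(x)\le 1+\epsilon$ immediately. The whole difficulty you flag simply disappears once you compare to the harmonic reduction $g_r$ rather than to $g$ itself.
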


\begin{proof}
Let $\mk{\phi}$ be a non-negative smooth function such that $\mk{\phi}(y) = 0$ for $\abs{y} > 1/2$ and $\int_{\mathbb{R}^d} \mk{\phi}(y) dy = 1$. For $k \geq 1$ we define $\mk{\phi}_k(y) =k^d\mk{\phi}(ky)$ and
\formula{
\mk{g_k}(x) := (\mk{\phi}_k \ast g)(x) := \int_{\mathbb{R}^d} \mk{\phi}_k(y) g(x - y)dy
}
and for $\mk{r} \leq 1/4$ let $D_{\mk{r}}^k := \{ y \in D_{\mk{r}} : \mk{\delta_D}(y) \geq 1/k \} $. Since $\mk{g_k}$ is a smooth function, $\mathcal{A}\mk{g_k}$ is well-defined everywhere. 

Let $x \in D_{\mk{r}}^k$ and $z \in B(0,\frac{1}{2k})$. By Lemma \ref{lem:gen_est} we have
$-c_{gen}  \leq \mathcal{A}g(x-z) \leq c_{gen}$. We claim that \mk{$\mathcal{A} g_k(x) = \phi_k * \mathcal A g(x)$ and consequently, by Lemma~\ref{lem:gen_est},}
\formula[eq:est_of_convolution]{
-c_{gen} \leq \mathcal{A}\mk{g_k}(x) \leq c_{gen}.
}

By Remark \ref{rem:g_Hol}\mk{, $g \in C^{\alpha + \epsilon}(D_1^*)$ for some $\epsilon > 0$. Hence, by} \eqref{eq:generator}, for $\alpha < 1$ we have
\formula{
\begin{aligned}
& \int_{\mathbb{R}^d}\int_{\mathbb{R}^d} \mk{\phi}_k(z) \abs{(g(y-z) - g(x-z))} \nu(y-x)dydz \\
\leq&  \int_{\mathbb{R}^d}\int_{\mathbb{R}^d}\mk{\phi}_k(z) \bigg( c\abs{y-x}^{\alpha+\epsilon} \mathbbm{1}_{B(x,1/4k)}(y) + 2\mathbbm{1}_{{B}^c(x,1/4k)}(y) \bigg) \nu(y-x)dydz < \infty.
\end{aligned}
}
Now, by Fubini Theorem, we have
\formula[eq:fubini1]{
\begin{aligned}
	\mathcal{A}\mk{g_k}(x) &= \int_{\mathbb{R}^d} (\mk{g_k}(y) -\mk{g_k}(x))\nu(y-x)dy  \\
	&= \int_{\mathbb{R}^d} \expr{ \int_{\mathbb{R}^d} \mk{\phi}_k(z) (g(y-z) - g(x-z))dz }\nu(y-x)dy \\
	&=\int_{\abs{z} < 1/\mk{2}k} \mk{\phi}_k(z) \expr{\int_{\mathbb{R}^d}(g(y-z) - g(x-z)) \nu(y-x)dy} dz \\
	& = \int_{\abs{z} < 1/\mk{2}k} \mk{\phi}_k(z) \mk{\mathcal{A} g(x - z)} dz = \mk{\phi_k * \mathcal{A} g(x)}.
\end{aligned}
}

In case $\alpha > 1$, by \mk{Remark \ref{rem:g_Hol} and} Lemma \ref{lem:lagrange} we write
\formula{
	& \int_{\mathbb{R}^d}\int_{\mathbb{R}^d} \mk{\phi}_k(z)\abs{(g(y-z) - g(x-z) - \langle \nabla g(x-z),y-x\rangle)}\nu(y-x)dydz  \\
	\leq& \int_{\mathbb{R}^d}\int_{\mathbb{R}^d} \mk{\phi}_k(z) \bigg(  c\abs{y-x}^{\alpha+\epsilon}\mathbbm{1}_{B(x,1/4k)}(y) \\
	&+ c(2 +\abs{y})\mathbbm{1}_{{B}^c(x,1/4k)}(y) \bigg) \nu(y-x)dydz < \infty.
}
\mk{Furthermore, $\nabla g_k = (\nabla g) * \phi_k$ in $D_r^k$.} Now, similarly as in \mk{the} case $\alpha < 1$, we write
\formula{
	\begin{aligned}
		\mathcal{A}\mk{g_k}(x) &= \int_{\mathbb{R}^d} (\mk{g_k}(y) -\mk{g_k}(x) - \langle \nabla \mk{g_k}(x),y-x\rangle))\nu(y-x)dy  \\
		=& \int_{\mathbb{R}^d} \expr{ \int_{\mathbb{R}^d} \mk{\phi}_k(z) (g(y-z) - g(x-z) - \langle \nabla g(x-z),y-x\rangle)dz }\nu(y-x)dy \\
		=&\int_{\abs{z} < 1/\mk{2}k} \mk{\phi}_k(z) \expr{\int_{\mathbb{R}^d}(g(y-z) - g(x-z) - \langle \nabla g(x),y-x\rangle) \nu(y-x)dy} dz \\
		 =& \int_{\abs{z} < 1/\mk{2}k} \mk{\phi}_k(z)\mk{\mathcal{A} g(x - z)} dz = \mk{\phi_k * \mathcal{A} g(x)}.
	\end{aligned}
}

Finally, in case $\alpha = 1$ we write \mk{similarly}
\formula{
	& \int_{\mathbb{R}^d}\int_{\mathbb{R}^d} \mk{\phi}_k(z)\abs{(g(y-z) - g(x-z) - \langle \nabla g(x-z),y-x\rangle \mathbbm{1}_{B(x,1/4k)}(y-z))}\nu(y-x)dydz  \\
	\leq& \int_{\mathbb{R}^d}\int_{\mathbb{R}^d} \mk{\phi}_k(z) \bigg(  c\abs{y-x}^{\alpha+\epsilon}\mathbbm{1}_{B(x,1/4k)}(y)+ \mk{2} \, \mathbbm{1}_{{B}^c(x,1/4k)}(y) \bigg) \nu(y-x)dydz < \infty .
}
Since $g \in C^{\alpha + \epsilon}(D^*_{1})$, \mk{$\nabla g$ is a continuus function in $D_1^*$, and $\nabla g_k = (\nabla g) * \ph_k$ in $D_r^k$. Thus,}
\formula{
\mathcal{A}\mk{g_k}(x) & = \mk{\langle \gamma, \nabla g_k(x) \rangle} + \int_{\mathbb{R}^d} (\mk{g_k}(y) -\mk{g_k}(x) - \langle \nabla \mk{g_k}(x),y-x\rangle \mathbbm{1}_{B(x,1/4k)}(y)))\nu(y-x)dy \\
&= \int_{\mathbb{R}^d} \langle \gamma, \nabla g(x - z) \rangle \phi_k(z) dz + \int_{\mathbb{R}^d} \biggl(\int_{\mathbb{R}^d} \mk{\phi}_k(z) (g(y-z) - g(x-z) \\
& \qquad - \langle \nabla g(x-z),y-x\rangle \mathbbm{1}_{B(x,1/4k)}(y-z))dz \biggr)\nu(y-x)dy \\
&=\int_{\abs{z} < 1/\mk{2}k} \mk{\phi}_k(z) \biggl(\langle \gamma, \nabla g(x - z) \rangle + \int_{\mathbb{R}^d}(g(y-z) - g(x-z) \\
 & \qquad - \langle \nabla g(x-z),y-x\rangle \mathbbm{1}_{B(x,1/4k)}(y)) \nu(y-x)dy \biggr) dz \\
& = \int_{\abs{z} < 1/\mk{2}k} \mk{\phi}_k(z) \mk{\mathcal{A} g(x - z)} dz = \mk{\phi_k * \mathcal{A} g(x)}.
}
\mk{This completes the proof of our claim~\eqref{eq:est_of_convolution}.}

\mk{Recall that} $\mk{g_k}$ is in $C_c^{\infty}(\mathbb{R}^d)$ and $\mathcal{A}$ restricted to $C_c^{\infty}$ coincides with the infinitesimal generator $\mathcal{L_D}$ of the process $X$. \mk{Denote} $\sigma(r,k) = \tau_{D_r^k}$. For $k \geq l$, by Dynkin's formula\mk{,} for $x \in D_r^l$ we have 
\formula{
\mathbb{E}^x \int_0^{\sigma(r,l)} \mathcal{A}\mk{g_k}(X_t)dt = \mathbb{E}^x (\mk{g_k}(X_{\sigma(r,l)})) - \mk{g_k} (x) .
}
Using \eqref{eq:est_of_convolution}, we get
\formula{
 -c_{gen} \mathbb{E}^x \sigma(r,l) \leq \mathbb{E}^x (\mk{g_k}(X_{\sigma(r,l)})) - \mk{g_k} (x) \leq c_{gen} \mathbb{E}^x \sigma(r,l) .
}
\mk{As $k \to \infty$, $g_k$ remains bounded by $1$ and it converges pointwise to $g$. Thus,}
\formula{
 -c_{gen} \mathbb{E}^x \sigma(r,l) \leq \mathbb{E}^x (g(X_{\sigma(r,l)})) - g(x) \leq c_{gen} \mathbb{E}^x \sigma(r,l) .
}
\mk{Now we pass to the limit as} $l \to \infty$. Since $\sigma(r,l)$ is \mk{an increasing} function of $l$, $g$ is bounded and $g(X_{\sigma(r,l)})$ converges almost surely to $g(X_{\tau_{D_r}})$\mk{,} we obtain that for all $x \in D_r$\mk{,}
\formula[eq:1st_ineq]{
 -c_{gen} \mathbb{E}^x ({\tau_{D_r}}) \leq \mathbb{E}^x (g(X_{\tau_{D_r}})) - g(x) \leq c_{gen} \mathbb{E}^x ({\tau_{D_r}}) .
}

In order to proceed we need to show a technical result comparing $g_r(x)$ with $\mathbb{E}^x({\tau_{D_r}})$. Since $\nu(y-z) \geq c\abs{y}^{-d-\alpha}$ for every $z \in D_r$ and $y \in {B}^c(0,r)$ we have
\formula[eq:green]{
\begin{aligned}
g_r(x) &= \int_{D^*_1 \setminus D_r} \expr{\int_{D_r} G_{D_r}(x,z)\nu(z-y)dz} g (y) dy \\
&\geq c\mathbb{E}^x({\tau_{D_r}})\int_{D^*_1 \setminus D_r} c\abs{y}^{-d-\alpha}g (y) dy,
\end{aligned}
}
where $G_{D_r}(x,z)$ is \mk{the} Green function of \mk{the} set $D_r$.

For $y \in B(0,1)$ with property that $2\abs{\tilde{y}} < y_d$ we have
\formula{
\abs{y} = \sqrt{y_d^2 + \abs{\tilde{y}}} \leq \sqrt{y_d^2 + \frac{1}{4}y_d^2} \leq c y_d.
}
By \eqref{eq:odl^2_2} we have
\formula{
\delta_D(y) &\geq y_d - \abs{\tilde{y}}^2 \geq y_d - \abs{\tilde{y}}  \geq \frac{y_d}{2} \geq c\abs{y},
}
so that 
\formula{
g(\mk{y}) \geq c\abs{y}^{\beta_{max}} \mk{.}
}
Thus, by using polar coordinates, we have
\formula{
\int_{D^*_1 \setminus D_r} \abs{y}^{-d-\alpha}g (y) dy &\geq  \int_{D_1 \setminus D_r} \abs{y}^{-d-\alpha}g (y) dy \\
& \geq c \int_{\{(\tilde{y},y_d):2\abs{\tilde{y}} < y_d, r < \abs{y} < 1 \}} \abs{y}^{-d-\alpha}\abs{y}^{\beta_{max}} dy \\ 
&\geq  c\int_{r}^{1} u^{-d-\alpha}u^{\beta_{max}}u^{d-1}du = c(r^{\beta_{max}-\alpha} - 1).
}
By combining this and \eqref{eq:green} we get that
\formula[eq:harm>time]{
g_r(x) \geq c(r^{\beta_{max}-\alpha} - 1)\mathbb{E}^x({\tau_{D_r}})
}
for $x \in D_r$.

Now \mk{suppose that} $\epsilon > 0$. By \eqref{eq:harm>time} there exists $\mk{r_0}$ such that  
\formula[eq:harm>time2]{
c_{gen} \mathbb{E}^x({\tau_{D_r}}) \leq \epsilon g_r(x)  
}
for $x\in B(0,r)$ and $r \le r_0$.
By combining \eqref{eq:harm>time2} with \eqref{eq:1st_ineq}\mk{,} we get that 
\formula[eq:g>harm]{
1-\epsilon \leq \frac{g_r(x)}{g(x)} \leq 1+\epsilon
}
for $x\in D_r$ and $r \le r_0$, which implies \eqref{eq:comp_g_time}.
\end{proof}

\begin{theorem}
Let $f$ be a non-negative function which is regular harmonic in $D_1$ and \mk{which} vanishes on $D^c \cap B(0,1)$. Then either $f$ is zero everywhere in $D$, or 
\formula{
\lim \frac{f(x)} {\delta_D(x)^{\beta(x)}} > \mk{0} \text{ exists as }x \to 0, x \in D.
}
\end{theorem}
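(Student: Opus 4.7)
The plan is to combine the boundary-limit theorem from~\cite{bib:jk16} (Theorem~\ref{thm:limit}) with the comparison estimate of Lemma~\ref{lem:main}. The function $g$ is not harmonic and thus cannot be fed directly into Theorem~\ref{thm:limit}; however, its harmonic reduction $g_r$ is, and by Lemma~\ref{lem:main} it is multiplicatively as close to $g$ as one wants for small $r$. The argument will therefore first relate $f$ to $g_r$ via the existence of ratio limits, and then replace $g_r$ by $g$ through a squeeze.

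For sufficiently small $r > 0$, I apply Theorem~\ref{thm:limit} to the pair $(f, g_r)$. Both functions are regular harmonic in $D_r$ and vanish on $D^c \cap B(0, r)$. Moreover, $g_r > 0$ on $D_r$: since the L\'evy density of $X$ is strictly positive on $\mathbb{R}^d \setminus \{0\}$, the process exits $D_r$ with positive probability into $\overline{D_1^*} \cap D_r^c$, where $g > 0$. Therefore, either $f$ vanishes throughout $D_r$ (which, by standard propagation arguments using positivity of the L\'evy density, forces $f \equiv 0$ on $D$ -- the first alternative of the theorem), or the finite, strictly positive limit
\[
L_r := \lim_{\substack{x \to 0 \\ x \in D}} \frac{f(x)}{g_r(x)}
\]
exists.

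In the non-trivial case, fix $\epsilon > 0$ and invoke the two-sided bound~\eqref{eq:g>harm} established in the proof of Lemma~\ref{lem:main}: there exists $r_0 > 0$ such that $1 - \epsilon \leq g_r(x)/g(x) \leq 1 + \epsilon$ for every $r \leq r_0$ and every $x \in D_r$. Factoring $f/g = (f/g_r)\cdot(g_r/g)$ and taking the appropriate upper/lower limits yields
\[
L_r(1 - \epsilon) \leq \liminf_{\substack{x \to 0 \\ x \in D}} \frac{f(x)}{g(x)} \leq \limsup_{\substack{x \to 0 \\ x \in D}} \frac{f(x)}{g(x)} \leq L_r(1 + \epsilon),
\]
so the ratio of $\limsup$ to $\liminf$ is at most $(1+\epsilon)/(1-\epsilon)$. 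Since $\epsilon$ was arbitrary, $\limsup$ and $\liminf$ coincide, giving existence of the limit; positivity follows from $L_r > 0$. Recalling that $g(x) = \delta_D(x)^{\beta(x)}$ in a neighborhood of $0$ concludes the proof. The real content of the argument lies in Lemma~\ref{lem:gen_est} and Lemma~\ref{lem:main}; the step presented here is essentially a sandwich together with a single application of the external result Theorem~\ref{thm:limit}, and the only mildly delicate point is verifying the positivity of $g_r$ on $D_r$ and propagating the degenerate case $f\equiv 0$ in $D_r$ to all of $D$.
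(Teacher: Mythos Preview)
Your proof is correct and follows essentially the same strategy as the paper: apply Theorem~\ref{thm:limit} to the harmonic pair $(f,g_{r})$ and then transfer the conclusion from $g_{r}$ to $g$ via the comparison of Lemma~\ref{lem:main}. The paper phrases the squeeze through the relative oscillation $RO_r$ and the submultiplicativity $RO_r(f/g)\le RO_r(f/g_{r_0})\,RO_r(g_{r_0}/g)$, whereas you use $\liminf/\limsup$ together with the two-sided bound~\eqref{eq:g>harm}; these are equivalent formulations, and your added remarks on the positivity of $g_r$ and the propagation of the degenerate case $f\equiv 0$ simply make explicit points the paper leaves implicit.
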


\begin{proof}
Let $\epsilon >0$ and \mk{let $r_0$ be chosen according to Lemma~\ref{lem:main}}. By Theorem \ref{thm:limit} and \mk{the} fact that $g_{r_0}$ and $f$ are harmonic in $D_{r_0}$\mk{,} there exists radius $r \leq r_0$ such that
\formula{
RO_r\expr{\frac{f}{g_{r_0}}} \leq 1 + \epsilon.
}
For any \mk{positive} functions $f_1,f_2,f_3$  we have
\formula{
RO_r \expr{\frac{f_1}{f_2}} = \frac{\sup_{x \in D_r}\frac{f_1(x)}{f_2(x)}}{\inf_{x \in D_r}\frac{f_1(x)}{f_2(x)}} 
\leq \frac{\sup_{x \in D_r}\frac{f_1(x)}{f_3(x)}\sup_{x \in D_r}\frac{f_3(x)}{f_2(x)}}{\inf_{x \in D_r}\frac{f_1(x)}{f_3(x)}\inf_{x \in D_r}\frac{f_3(x)}{f_2(x)}} = RO_r \expr{\frac{f_1}{f_3}} RO_r \expr{\frac{f_3}{f_2}} .
}
Thus\mk{, by Lemma~\ref{lem:main},}
\formula{
RO_r\expr{\frac{f}{g}} \leq RO_r\expr{\frac{f}{g_{r_0}}} RO_{r}\expr{\frac{g_{r_0}}{g}} \leq\frac{(1+\epsilon)^2}{1 - \epsilon}.
}
\mk{Since} $\epsilon$ was chosen arbitrarily\mk{,} we have $RO_r \expr{\frac{f}{g}} \to 1$ as $r \to 0$.
\end{proof}

\begin{remark}
Note that, unlike in work of T. Grzywny, K.-Y. Kim and P. Kim from 2015 (\hspace{1sp}\cite{bib:gkk15}), with our methods we can not relax assumption of $D \in C^{1,1}$. If $D \in C^{1,\beta}$ for $\beta < 1$, then function $n(x)$ (and so $\beta(x)$ and $g_z(x)$) is not even a continuous function.
\end{remark}

\textbf{Acknowledgments} I would like to thank Mateusz Kwaśnicki for his help, valuable comments and ideas that helped to improve our results. I would also like to thank Zhen-Qing Chen for his valuable comments and discussion.

%
%

\bibliographystyle{plain}
\bibliography{mybib}

\end{document}